\newtheorem{theorem}{Theorem}
\newtheorem{lemma}[theorem]{Lemma}
\newtheorem{corollary}[theorem]{Corollary}
\newtheorem{proposition}[theorem]{Proposition}
\theoremstyle{definition}
\newtheorem{definition}[theorem]{Definition}
\newcommand{\R}{\mathbb{R}}
\newcommand{\Rn}{\R^n}
\newcommand{\Q}{\mathbb{Q}}
\newcommand{\Qn}{\Q^n}
\newcommand{\GLnZ}{\mathsf{GL}(n,\Zed)\ltimes \Zed^n}
\newcommand{\GLeZ}{\mathsf{GL}(e,\Zed)\ltimes \Zed^e}
\newcommand{\Zed}{\ensuremath{\mathbb{Z}}}
\newcommand{\den}{\mathrm{den}}
\newcommand{\conv}{\mathrm{conv}}
\newcommand{\aff}{\mathrm{aff}}
\newcommand{\rank}{\mathrm{rank}}
\DeclareMathOperator{\orb}{\rm orb}
 \title[Classifying $(\GLnZ)$-orbits]
{Classifying   orbits 
of the affine group over the integers}
\author{Leonardo Manuel Cabrer and Daniele Mundici }
\thanks{This research was supported by a 
 Marie Curie Intra European Fellowship 
 within the 7th European Community Framework 
 Program (ref. 299401-FP7-PEOPLE-2011-IEF)} 
\address[L.M. Cabrer]{Department of Statistics,
Computer Science and Applications,  ``Giu\-sep\-pe Parenti''\\ 
University of Florence\\
Viale Morgagni 59 --
50134\\ Florence \\
Italy}
\email{ l.cabrer@disia.unifi.it }
\address[D. Mundici]{Department of
Mathematics and Computer Science  ``Ulisse Dini'' \\
University of Florence\\
Viale Morgagni 67/A \\
I-50134 Florence \\
Italy}
\email{ mundici@math.unifi.it }
\begin{document}

\thanks{2000 {\it Mathematics Subject Classification.}
Primary:   37C85.  Secondary: 11B57, 22F05, 37A45, 06F20, 46L80.}
\keywords{Affine group over the integers,  orbit,
$\mathsf{GL}(n,\mathbb Z) \ltimes \mathbb Z^{n}$-orbit, 
complete invariant,  rational simplex, Farey regular simplex. }

\begin{abstract}
For each  $n=1,2,\dots$, let  $\GLnZ$ be the affine group
over the integers.
For every  point $x=(x_1,\dots,x_n) \in \Rn$ let  
$\orb(x)=\{\gamma(x)\in \Rn\mid
\gamma\in \GLnZ\}.$ 
Let $G_{x}$ be the subgroup of the additive group
$\mathbb R$ generated by $x_1,\dots,x_n, 1$. 
If    $\rank(G_x)\neq n$ 
then 
$\orb(x)=\{y\in\Rn\mid G_y=G_x\}$. Thus,
$G_x$ is a complete classifier of $\orb(x)$.
By contrast, if  $\rank(G_x)=n$, knowledge of 
$G_x$ alone is not   sufficient in general 
to uniquely recover  $\orb(x)$: as a matter of fact,  $G_x$
determines precisely  $\max(1,\frac{\phi(d)}{2})$
different orbits, where $d$ is the denominator
of the smallest positive nonzero rational in $G_x,$
and $\phi$ is Euler function.
To get a complete classification, rational polyhedral geometry 
provides  an integer  $1\leq c_x\leq \max(1,d/2)$  such that
$\orb(y)=\orb(x) $    iff
$(G_{x},c_{x})=(G_{y},c_{{y}})$.   
\end{abstract}

\maketitle

\section{Introduction} 
Throughout we   let
$\GLnZ$ 
denote the group of affine  transformations of the form
$
x\mapsto \mathcal{U}x + t \,\, \mbox{for} \,\, x\in \mathbb R^n,
$
 where $t\in \mathbb Z^n$
 and  $\mathcal U$ is an integer  $n \times n$ matrix with
 $\det(\mathcal U)=\pm 1$.   
The dimension  of the ambient space  $\Rn$ will always be clear from
the context.
We let $\orb(x)$  denote the
{\it $(\GLnZ)$-orbit} of $x\in \Rn,$
\[
\orb(x)=\{\gamma(x)\in \Rn\mid
\gamma\in \GLnZ\}.
\]

In this self-contained  paper, generalizing  \cite{Mun201X}
we  classify
$(\GLnZ)$-orbits for all $n$:   
every point in  $\Rn$ is assigned an invariant,
in such a way that two points have the same $(\GLnZ)$-orbit iff
they have the same invariant.
For each $x=(x_1,\ldots,x_n)\in \Rn$,
the  main invariant considered in this paper is the group
\[
G_x=\mathbb Z+\mathbb Zx_1+\mathbb Zx_2+\dots+\mathbb Zx_n
\]
generated by $1,x_1,x_2,\ldots,x_n$ in the
 additive group
$\mathbb R$.
From
Corollaries \ref{corollary:semifinal} and
\ref{corollary:final} it follows that
  $G_x$  
 completely classifies  the $(\GLnZ)$-orbit of any
 $x\in\Rn$ when 
$\rank(G_x)\neq n$.

If  $\rank(G_x)=n$,
\,  $G_x$   determines  precisely  $\max(1,\frac{\phi(d)}{2})$
different $(\GLnZ)$-orbits, where $d$ is the denominator
of the smallest positive nonzero rational in $G_x$
and $\phi$ is Euler function. To get a complete
  invariant for $\orb(x)$, rational polyhedral geometry equips $x$ with 
   an integer 
    $1\leq c_x\leq \max(1,d/2)$ 
     which, together with
   $G_x$, completely classifies $\orb(x)$. Thus
   in Theorem \ref{theorem:classification} we prove: 
     $\orb(x)=\orb(y)$  iff
$(G_x,c_x)=(G_y,c_y).$
   In case  $\rank(G_x)\not=n,\,\,\, c_x=1$.

While the study of the orbits  of 
the groups   $\mathsf{GL}(2,\mathbb Z)$,   
$\mathsf{SL}(2,\mathbb Z),$
$\mathsf{SL}(2,\mathbb Z)_+$ 
requires techniques from various mathematical areas 
\cite{dan, gui, launog, nog2002, nog2010, wit},
a main possible reason of interest in our 
classification  stems
from  the pervasive and novel  role played by
 rational polyhedral geometry  \cite{Ew1996},
 through the fundamental notion of a
(Farey) regular simplex \cite{mun-cpc}.
Regular simplexes in $\Rn$ are the affine counterparts
of regular cones in $\mathbb Z^{n+1}.$  Regular fans
 are complexes
of regular cones, and yield a combinatorial classification 
of  nonsingular toric varieties~\cite{Ew1996}.

\section{Preliminaries: affine spaces and regular simplexes}
\label{section:preliminaries}
A point $y=(y_1,\ldots,y_n) \in\Rn$
is said to be   {\it rational}   if each coordinate $y_i$ is
a rational number.

A {\it rational hyperplane} $H \subseteq \mathbb R^n$
is  a set of the form 
$ 
H =\{z \in \mathbb R^{n}   \mid  \langle h, z\rangle = r\},
$ 
    for some nonzero vector
     $h\in \Qn$ and $r\in \mathbb Q$.
     Here $\langle \mbox{-},\mbox{-}\rangle$ denotes scalar product.
A {\it rational affine space} $A$ in  $\mathbb R^n$
is  an  intersection
of rational hyperplanes in $\mathbb R^n$.
For any  $S\subseteq \mathbb R^n$
  the   {\it affine hull}  
 $\,\,\aff(S)\,\,$    is the set of all {\it affine combinations} in  $\mathbb R^n$
of elements of $S$.  Thus  $z$ belongs to   $\aff(S)$
  iff there are   $w_1,\ldots,w_k\in S$
and   $\lambda_1,\ldots,\lambda_k \in \mathbb R$
such that $\lambda_1+\dots+\lambda_k=1$
and $z=\lambda_1\cdot w_1+\dots+\lambda_k\cdot w_k$.
A set  $\{y_1,\ldots,y_m\}$ of  points in
$\mathbb R^n$ is said to be
{\it affinely independent}  if    none of its  elements 
 is  an affine combination of the remaining elements.
 One then easily sees that a subset $E$ of  $\Rn$
is  a rational affine subspace
 of $\Rn$ iff there exist $v_0,\ldots,v_m\in\Qn$ such that 
 $E=\aff(v_0,\ldots,v_m)$.
 For $\,\,0\leq m\leq n$, an {\it m-simplex}
in $\mathbb R^{n}$ is the
{\it convex hull} $T = \conv(v_{0},\ldots,v_{m})$ of $m+1$ affinely
independent points $v_{0},\ldots,v_{m}\in \mathbb R^n$.  The {\it  vertices}
$v_{0},\ldots,v_{m}$ are uniquely determined by $T$.
$T$ is said to be a {\it rational simplex}  if its vertices are
rational.

By  the  {\it denominator}  $\den(y)$ of a rational point
  $y$
 we understand the least common denominator 
  of its coordinates. 
The  {\it homogeneous correspondent}
of a rational point $y=(y_1,\ldots,y_n)\in \Rn$ is the 
integer  vector 
 \[
 \widetilde{ y } = (\den(y)\cdot y_1,
 \ldots, \,\den(y)\cdot y_n,\,\,\den(y)) 
 \in \mathbb Z^{n+1}.
  \]

\noindent
Following \cite{mun-cpc},
a rational $t$-simplex  
$T=\conv(v_0,\ldots,v_t)\subseteq \Rn$
is said to be  {\it (Farey) regular} 
if the set $\{\tilde v_0 ,\ldots, \tilde v_t\}$ of 
homogeneous correspondents of the vertices of $T$
can be extended to a   base  of
the free abelian group $\mathbb Z^{n+1}$.
(Regular simplexes are called ``unimodular'' in \cite{mun-dcds}.)

 \begin{lemma}
       \label{Lemma:GammaFromSimplex}
For each rational point 
$x\in\Rn$
and  $\psi\in\GLnZ$, 
$\den(x)=\den(\psi(x)).$
      Moreover, if
$\conv(v_{0},\ldots,v_{n})\subseteq \mathbb R^{n}$ and
$\conv(w_{0},\ldots,w_{n})\subseteq \mathbb R^{n}$ 
are regular
$n$-simplexes, the following conditions are equivalent:
\begin{itemize}
\item[(i)] $\den(v_{i})=\den(w_{i})$ for all
$i\in\{0,\ldots,n\}$;

\smallskip
\item[(ii)] there exists $\gamma\in\GLnZ$ such that $\gamma(v_i)=w_i$ for each $i\in\{0,\ldots, n\}$.
\end{itemize}
\end{lemma}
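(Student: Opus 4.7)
The proof breaks naturally into the two assertions.

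For the first assertion, the plan is to use the fact that $\GLnZ$ is a group. Writing $\psi(x)=\mathcal{U}x+t$ with $\mathcal{U}\in\mathsf{GL}(n,\Zed)$ and $t\in\Zn$, each coordinate of $\psi(x)$ is an integer linear combination of the coordinates of $x$ plus an integer; clearing denominators gives $\den(\psi(x))\mid\den(x)$. Applying the same observation to $\psi^{-1}\in\GLnZ$ yields the reverse divisibility, hence equality. The implication (ii)$\Rightarrow$(i) of the equivalence is then immediate.

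For (i)$\Rightarrow$(ii), the key idea is to lift the problem to $\Zed^{n+1}$ via the homogeneous correspondent, exploiting regularity. Since both simplexes are regular, the tuples $(\tilde v_0,\ldots,\tilde v_n)$ and $(\tilde w_0,\ldots,\tilde w_n)$ are $\Zed$-bases of $\Zed^{n+1}$. There is therefore a unique $L\in\mathsf{GL}(n+1,\Zed)$ with $L(\tilde v_i)=\tilde w_i$ for every $i$. The plan is to read off from $L$ the desired affine $\gamma\in\GLnZ$.

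The main step is to show that $L$ has the block form
\[
L=\begin{pmatrix} A & b \\ 0 & 1 \end{pmatrix},\qquad A\in\mathsf{GL}(n,\Zed),\ b\in\Zn,
\]
where $0$ is the zero row. Since the last coordinate of $\tilde v_i$ is $\den(v_i)$ and the last coordinate of $\tilde w_i=L(\tilde v_i)$ is $\den(w_i)$, hypothesis (i) says $L$ preserves the last coordinate on every element of the basis $\{\tilde v_0,\ldots,\tilde v_n\}$. By $\Zed$-linearity, $L$ preserves the last coordinate on all of $\Zed^{n+1}$, which forces the bottom row of $L$ to equal $(0,\ldots,0,1)$ and yields the stated block form. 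The map $\gamma(x)=Ax+b$ then belongs to $\GLnZ$; and a direct check on homogeneous correspondents (dividing the first $n$ entries of $L(\tilde v_i)=\tilde w_i$ by the common value $\den(v_i)=\den(w_i)$, which is a positive integer, hence nonzero) gives $\gamma(v_i)=w_i$ for every $i$.

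The only mildly delicate point is the bottom-row argument: one must be careful that the basis property is needed to propagate the ``last coordinate is preserved'' condition from the $n+1$ vertices to all of $\Zed^{n+1}$. Everything else is routine bookkeeping.
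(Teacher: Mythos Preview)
Your proof is correct and follows essentially the same approach as the paper's: both lift to $\Zed^{n+1}$ via homogeneous correspondents, use regularity to obtain a matrix in $\mathsf{GL}(n+1,\Zed)$ sending the $\tilde v_i$ to the $\tilde w_i$, deduce from the equality of last coordinates that this matrix has bottom row $(0,\ldots,0,1)$, and then read off the affine map $\gamma$. Your presentation is slightly more explicit about the first assertion and about why the bottom-row condition propagates from the basis to all of $\Zed^{n+1}$, but the argument is the same in substance.
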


\begin{proof}
It is easy to see that $\den(x)=\den(\psi(x)),$ for every
$\psi\in \GLnZ.$
So
(ii)$\Rightarrow$(i) is immediately verified.
For the converse direction (i)$\Rightarrow$(ii), the  regularity
of 
$\conv(v_{0},\ldots,v_{n})$ and
$\conv(w_{0},\ldots,w_{n})$ 
yields
    bases $\{\tilde{v}_{0},\ldots,
\tilde{v}_{n}\}$ and $\{\tilde{w}_0,\ldots,\tilde{w}_n\}$
  of the free
abelian group $\mathbb Z^{n+1}$. 
There are   integer matrices
$\mathcal M$ and $\mathcal N$
such that $\mathcal M \tilde{v}_i=\tilde{w}_i$
 and 
$\mathcal N \tilde{w_i}=\tilde{v}_i$ for each $i\in\{0,\ldots,n\}$.
It follows that $\mathcal M\cdot \mathcal N$ and 
$\mathcal N\cdot \mathcal M$ are equal to the identity matrix. 
For each 
$i\in\{0,\ldots,1\}$ the
 last coordinate of $\tilde{v}_i$, as well as the last coordinate
 of $\tilde{w}_i$,  coincide
 with  $\den(v_i)=\den(w_i)$. Thus $\mathcal M$ and $\mathcal N$ 
  have the form
\[
\mathcal{M}=\left(\begin{tabular}{c|c} $\mathcal U$ & $b$ \\ \hline $0,\ldots,0 $ &
$1$ \end{tabular}\right) \quad \mathcal{N}=\left(\begin{tabular}{c|c} $\mathcal V$ & $c$ \\ \hline $0,\ldots,0 $ &
$1$ \end{tabular}\right) 
\] 

\medskip
\noindent for some  integer $(n\times n)$-matrices $\mathcal U$ and $\mathcal V$
and integer vectors $b,c\in\mathbb Z^n$.  
For each  $i=0,\ldots,n$ the following holds:
$\mathcal{M}\tilde{v}_{i}= \mathcal M(\den(v_{i})\cdot(v_{i},1))
=\den(w_{i})\cdot(\mathcal Uv_{i}+b,1).  $ 
From
$\mathcal M\tilde{v}_{i}=\tilde{w}_{i}=\den(w_{i})\cdot(w_{i},1)$ 
we obtain
$\mathcal Uv_{i}+b=w_{i}$.
Similarly,  $\mathcal Vw_i +c=v_i$. 
In conclusion,
both maps $\gamma(x)=\mathcal Ux+b$ 
and $\mu(y)=\mathcal Vy+c$ are in $\GLnZ$.
Further, 
$\gamma^{-1}=\mu$ and $\gamma(v_i)=w_i$ for each $i\in\{0,\ldots, n\}$.
\end{proof}

\begin{lemma}\label{lemma:RegularInAffineSpace}
Let $F\subseteq \Rn$ be
an $e$-dimensional rational affine space
$(e=0,\dots,n)$,
and  $v_0$  a rational point lying in $F$. Then there
 exist rational points $v_1,\ldots,v_e\in F$ such that the simplex
 $\conv(v_0,\ldots,v_e)$ is regular.
\end{lemma}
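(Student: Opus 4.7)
The plan is to pass to homogeneous coordinates in $\R^{n+1}$ and use the theory of pure (saturated) sublattices. Let $W \subseteq \R^{n+1}$ be the $(e+1)$-dimensional rational linear subspace spanned by the set $\{(z,1)\mid z\in F\}$, and set $L = W\cap \Zed^{n+1}$. Since $W$ is rational, $L$ is a pure sublattice of $\Zed^{n+1}$ of rank $e+1$. The task reduces to exhibiting a $\Zed$-basis of $L$ of the form $\tilde v_0, \tilde v_1,\ldots,\tilde v_e$ for suitable rational points $v_1,\ldots,v_e\in F$. Purity then automatically promotes such a basis to a basis of $\Zed^{n+1}$, yielding regularity of $\conv(v_0,\ldots,v_e)$; and linear independence of the $\tilde v_i$ is equivalent to affine independence of the $v_i$, so they really are vertices of an $e$-simplex.

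The vector $\tilde v_0$ is primitive in $\Zed^{n+1}$, because by definition of the least common denominator the gcd of the components of $(\den(v_0)v_0,\den(v_0))$ equals $1$; purity then makes $\tilde v_0$ primitive in $L$ as well, so it extends to a $\Zed$-basis $\{\tilde v_0, u_1,\ldots,u_e\}$ of $L$, which, by purity again, extends further to a basis of $\Zed^{n+1}$. In particular each $u_i$ is primitive in $\Zed^{n+1}$. Next I replace each $u_i$ by $u_i+k_i\tilde v_0$ for a sufficiently large positive integer $k_i$: this keeps the family a basis of $L$ (and of $\Zed^{n+1}$ with the same extension, so primitivity is preserved), while making the last coordinate $\ell_i$ of the adjusted $u_i$ strictly positive. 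Writing the adjusted $u_i=(a_i,\ell_i)$ with $a_i\in\Zed^n$ and $\ell_i>0$, primitivity gives $\gcd(a_{i,1},\ldots,a_{i,n},\ell_i)=1$; this is exactly the statement that $\den(a_i/\ell_i)=\ell_i$, so setting $v_i:=a_i/\ell_i$ yields $u_i=\tilde v_i$. Finally, membership $u_i\in W$ forces $u_i=\ell_i(z,1)$ for some $z\in F$, and necessarily $z=v_i$, so $v_i\in F$ as required.

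The main obstacle I foresee is the bookkeeping around the last coordinate: after the shift $u_i\mapsto u_i+k_i\tilde v_0$, it is essential that the resulting integer vector be exactly $\tilde v_i$ rather than a positive multiple $m\,\tilde v_i$, since otherwise the basis element I produce is not itself a homogeneous correspondent and regularity is lost. This is precisely why primitivity in $\Zed^{n+1}$ must be maintained throughout — a property guaranteed here only because purity of $L$ lets every basis of $L$ be completed to a basis of the full lattice $\Zed^{n+1}$.
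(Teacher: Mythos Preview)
Your proof is correct and follows essentially the same route as the paper: pass to homogeneous coordinates in $\Zed^{n+1}$ and extend the primitive vector $\tilde v_0$ to a basis whose first $e+1$ members are homogeneous correspondents of rational points of $F$. The paper's proof is extremely terse---it simply asserts that the basis $(b_0=\tilde v_0,b_1,\dots,b_n)$ of $\Zed^{n+1}$ ``may be chosen'' so that $b_1,\dots,b_e$ are of the form $\tilde v_i$ with $v_i\in F$---whereas you actually justify this via the pure sublattice $L=W\cap\Zed^{n+1}$ and the shift $u_i\mapsto u_i+k_i\tilde v_0$ to force positive last coordinate. One minor remark: the implication ``$\tilde v_0$ primitive in $\Zed^{n+1}$ $\Rightarrow$ primitive in $L$'' does not require purity (it holds for any sublattice), so you could drop the word ``purity'' from that step; purity is needed only for the extension of a basis of $L$ to a basis of $\Zed^{n+1}$.
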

\begin{proof}
The primitive integer vector $\tilde v_0\in \Rn$ is part of a
base  $(b_0=\tilde{v}_0,b_1,\dots,b_n)$  of  the free abelian group
$\mathbb Z^{n+1}$.
We may choose the integer vectors
   $b_1,\dots,b_e$ in such a way that, 
for each  $i=1,\dots,e$, 
the uniquely determined rational
point  $v_i$  in $\Rn$ with   $\tilde v_i=b_i$
belongs to $F$. It follows that 
$\conv(v_0,\ldots,v_e)$ is a regular
$e$-simplex contained in $F$.  
\end{proof}

\section{Classification of rational affine subspaces of $\Rn$}

Given two rational affine spaces $F,G\subseteq \Rn$,
the main result of this section 
(Theorem~\ref{Theorem:ClassAffineSpace})  provides
a necessary and sufficient condition for the existence of 
$\gamma\in\GLnZ$ such that $\gamma(F)=G$. 
This is a first step for our complete classification 
of $(\GLnZ)$-orbits  in Section~\ref{Section:Class}.

\begin{definition}\label{Def:DF}
For every  non-empty rational affine space $F\subseteq \Rn$ we let 
\[
\boxed{d_F=\min\{\den(v)\mid v\in  F\cap \Qn\}.}
\]
\end{definition}

We have the following useful characterization of $d_F:$
\begin{lemma}
\label{Lemma:CalculationofD}
Let $F\subseteq \Rn$ be an $e$-dimensional rational affine space
$(e=0,\dots,n)$. 
For any regular 
 $e$-simplex $\conv(v_0,\ldots,v_e)\subseteq F$, \,\,\,
  $d_F=\gcd(\den(v_0),\ldots, \den(v_e))$.
\end{lemma}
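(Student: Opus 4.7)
The plan is to prove both inequalities between $d_F$ and $g:=\gcd(\den(v_0),\ldots,\den(v_e))$ by working in the lattice $\Z^{n+1}$ via the homogeneous correspondents. The key identity to keep in mind is that for any rational affine combination $w=\sum\lambda_iv_i$ (with $\sum\lambda_i=1$) inside $F$, one has
\[
\widetilde{w} \;=\; \sum_{i=0}^{e}\lambda_i\,\frac{\den(w)}{\den(v_i)}\,\widetilde{v}_i,
\]
so relations in $F$ between rational points and relations in $\Z^{n+1}$ between their homogeneous correspondents are the same thing (modulo tracking denominators in the last coordinate).

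For the inequality $d_F\le g$, I would invoke B\'ezout to choose integers $a_0,\ldots,a_e$ with $\sum_i a_i\den(v_i)=g$, and set $u=\sum_i a_i\widetilde{v}_i\in\Z^{n+1}$. Since the last coordinate of $u$ is $g>0$, the vector $u/g\in\Q^{n+1}$ is the homogeneous correspondent of a rational point $w\in\Q^n$ (with $\den(w)\mid g$), and, writing $\lambda_i=a_i\den(v_i)/g$, we see that $\sum_i\lambda_i=1$ and $w=\sum_i\lambda_iv_i\in F$. Hence $d_F\le\den(w)\le g$.

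For the reverse inequality $g\le d_F$, pick $w\in F\cap\Q^n$ with $\den(w)=d_F$. Affine independence of $v_0,\ldots,v_e$ over $\Q$ gives unique rationals $\lambda_i$ with $\sum\lambda_i=1$ and $w=\sum\lambda_iv_i$, whence $\widetilde{w}=\sum_i\beta_i\widetilde{v}_i$ in $\Q^{n+1}$ with $\beta_i=\lambda_id_F/\den(v_i)$. Here is where regularity is used: extend $\widetilde{v}_0,\ldots,\widetilde{v}_e$ to a $\Z$-basis $\widetilde{v}_0,\ldots,\widetilde{v}_e,b_{e+1},\ldots,b_n$ of $\Z^{n+1}$. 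Since $\widetilde{w}\in\Z^{n+1}$ expands uniquely in this basis over $\Q$, and since our expansion already has zero coefficients on $b_{e+1},\ldots,b_n$, the $\beta_i$ must be the (necessarily integer) coordinates of $\widetilde{w}$ in the basis. Reading off the last coordinate yields $d_F=\sum_i\beta_i\den(v_i)$, so $g\mid d_F$ and in particular $g\le d_F$.

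The only subtle step is justifying that the $\beta_i$ are integers: a priori they are only rationals, and it is precisely the Farey regularity hypothesis on $\conv(v_0,\ldots,v_e)$ which converts a $\Q$-linear relation in $\Z^{n+1}$ into a $\Z$-linear one. Everything else is B\'ezout and the bookkeeping of last coordinates inherent to the homogeneous correspondent.
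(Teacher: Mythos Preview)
Your proof is correct and follows essentially the same route as the paper's: both directions are handled exactly as there, using B\'ezout to produce a point of $F$ with denominator (dividing) $g$, and using regularity to show $\tilde w\in\Zed\tilde v_0+\cdots+\Zed\tilde v_e$ so that $\den(w)$ is an integer combination of the $\den(v_i)$. Your write-up is somewhat more explicit than the paper's in justifying why the coefficients $\beta_i$ are integers (by extending to a basis of $\Zed^{n+1}$), and slightly more careful in only claiming $\den(w)\mid g$ rather than $\den(w)=g$; one cosmetic quibble is that $u/g$ is not literally the homogeneous correspondent of $w$ unless $u$ is primitive, but your subsequent statement $\den(w)\mid g$ shows you are aware of this.
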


\begin{proof}
Let $d= \gcd(\den(v_0),\ldots, \den(v_e))$. 
For each  $v\in F\cap\Qn$, $\tilde v\in \Zed\tilde v_0+\cdots+ \Zed\tilde v_e$. 
Since  $\den(v)$ is a  linear combination of  $\den(v_0),\ldots, \den(v_e)$
with integer coefficients, then
 $d\leq d_F$. Conversely, for suitable integers
$m_0,\ldots, m_e$
we can write 
$d=m_0\cdot \den(v_0)+\cdots+m_e \cdot \den(v_e)$. 
The uniquely determined rational point  $w\in F\cap \Qn$ 
with  $\tilde w=m_0\cdot\tilde v_0+\cdots+m_e\cdot \tilde v_e$ 
  satisfies    $\den(w)=d$. Thus $d_F\leq d$.
\end{proof}

\begin{lemma}\label{Lemma:HomogeneousSimplex}
Let $F\subseteq \Rn$ be an $e$-dimensional rational affine space
$(e=0,\dots,n)$. 
Then
 there are rational points $v_0,\ldots,v_e$ lying in $F$ 
  such that  
 $\conv(v_0,\ldots,v_e)$ is a  regular  $e$-simplex and $\den(v_i)=d_F$
  for each  $i\in\{0,\dots,e\}$.
\end{lemma}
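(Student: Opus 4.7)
The plan is to begin with any regular $e$-simplex in $F$ and then adjust its vertices by unimodular integer moves so that each denominator comes down to $d_F$. By Definition~\ref{Def:DF} I can fix a rational point $v_0\in F$ with $\den(v_0)=d_F$, and Lemma~\ref{lemma:RegularInAffineSpace} then supplies $v_1,\ldots,v_e\in F\cap\Qn$ making $\conv(v_0,v_1,\ldots,v_e)$ a regular $e$-simplex. By Lemma~\ref{Lemma:CalculationofD}, $d_F=\gcd(\den(v_0),\ldots,\den(v_e))$, so in particular $d_F$ divides every $\den(v_i)$. This divisibility is the key arithmetic fact on which the rest of the argument rests.

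For each $i\in\{1,\ldots,e\}$ I would define
\[
\tilde v_i':=\tilde v_i-\frac{\den(v_i)-d_F}{d_F}\,\tilde v_0\in\Zed^{n+1}.
\]
The integer coefficient on $\tilde v_0$ is well-defined by the divisibility just noted, and the last coordinate of $\tilde v_i'$ is exactly $d_F$. Since $d_F$ is the minimum denominator attained on $F\cap\Qn$, the vector $\tilde v_i'$ must be primitive: otherwise, cancelling a common factor would produce a homogeneous correspondent with last coordinate strictly smaller than $d_F$, contradicting the very definition of $d_F$. Hence $\tilde v_i'$ is the homogeneous correspondent of a unique rational point $v_i'\in\Rn$ with $\den(v_i')=d_F$, and a direct computation shows that $v_i'$ equals the affine combination $(\den(v_i)/d_F)\,v_i+(1-\den(v_i)/d_F)\,v_0$, whence $v_i'\in F$.

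I would then check that $\conv(v_0,v_1',\ldots,v_e')$ is still a regular $e$-simplex. The passage $\tilde v_i\mapsto\tilde v_i'$ is an elementary integer column operation on the original $\Zed^{n+1}$-basis extending $\{\tilde v_0,\ldots,\tilde v_e\}$, so $\{\tilde v_0,\tilde v_1',\ldots,\tilde v_e'\}$ still extends to a basis of $\Zed^{n+1}$. Because all $e+1$ of these homogeneous correspondents share the common last coordinate $d_F$, any hypothetical affine dependence among $v_0,v_1',\ldots,v_e'$ translates verbatim into a $\Q$-linear dependence among $\tilde v_0,\tilde v_1',\ldots,\tilde v_e'$, contradicting their membership in a $\Zed^{n+1}$-basis. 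Hence the new vertices are affinely independent, and the simplex is regular with all denominators equal to $d_F$, as required.

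The genuinely routine parts are the column-operation and primitivity verifications; the one step requiring a little care is the clean translation between $\Zed$-linear independence of the homogeneous correspondents and affine independence of the underlying points, which works smoothly here precisely because the denominators have been forced to coincide.
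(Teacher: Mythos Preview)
Your proof is correct and follows essentially the same route as the paper's: start from a regular $e$-simplex in $F$ having one vertex $v_0$ of denominator $d_F$, then replace each remaining homogeneous correspondent $\tilde v_i$ by $\tilde v_i-m_i\tilde v_0$ for a suitable integer $m_i$ so that all denominators become $d_F$, noting that such elementary operations preserve the basis property. The only cosmetic difference is that you invoke Lemma~\ref{Lemma:CalculationofD} to get $d_F\mid\den(v_i)$ and thereby compute $m_i=(\den(v_i)-d_F)/d_F$ exactly, whereas the paper takes $m_i$ from Euclidean division and then squeezes the new denominator between $d_F$ (by minimality) and $d_F$ (by the choice of $m_i$).
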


\begin{proof}
Pick  $v_0\in F\cap \Qn$  with  $\den(v_0)=d_F$.
 Lemma~\ref{lemma:RegularInAffineSpace} yields rational points
$w_1,\ldots,w_e\in F$ such that $\conv(v_0,w_1,\ldots,w_e)$ is regular. 
If 
$\den(w_i)>d_F$
for some $i\in\{1,\ldots,e\}$,  then
let   the integer $m_i$  be uniquely determined by writing
  $m_i \cdot d_F<\den(w_i)\leq (m_i+1) \cdot  d_F$.
 Let $v_i\in F\cap\Qn$ be the unique rational point 
 with  $\tilde v_i=\tilde w_i-m_i \tilde v_0$. Then 
$d_F\leq\den(v_i)=\den( w_i)- m_i \cdot 
 \den(v_0)=\den( w_i)- m_i  \cdot  d_F\leq d_F.$
The  new $(e+1)$-tuple of integer vectors 
$(\tilde v_0,\tilde v_1,\ldots,\tilde v_e)$ can be completed to a base of $\Zed^{n+1}$,
precisely  as 
 $(\tilde v_0,\tilde w_1\ldots,\tilde w_e)$ does.
Thus  $\conv(v_0,\ldots,v_e)$ is regular.
\end{proof}

\begin{definition}
\label{definition:c}
Let $F\subseteq \Rn$ be an
 $e$-dimensional
 rational affine space. If   $0\leq e <n$
 we define 
\[
\boxed{
c_F=\min\{\den(v)\mid v\in \mathbb Q^n\setminus
 F\mbox{ and $\exists$ } v_0,\ldots, v_e\in F\cap\Qn\mbox{ with }\conv(v,v_0,\ldots,v_e)\mbox{ regular} \}.
 }
 \]
If $e=n$ we define  $c_F=1$.
\end{definition}

\medskip
\begin{lemma}
\label{Lemma:FullSimplexControlledDenominators}

Let $F\subseteq \Rn$ be an $e$-dimensional rational affine space
$(e=0,\dots,n)$. Then  there are rational points $v_0,\ldots,v_n$ such that
\begin{itemize}
\item[(i)]  $v_0,\ldots,v_e\in F$; 
\item[(ii)] $\den(v_i)=d_F$\,\, for all $i\in\{0,\dots,e\}$;
\item[(iii)] $\den(v_i)=c_F$\,\, for all $i\in\{0,\dots,n\}\setminus\{0,\dots,e\}$;
\item[(iv)] $\conv(v_0,\ldots,v_n)$ is a regular $n$-simplex (whence in particular, 
$v_{e+1},\dots,v_n\notin F$).
\end{itemize}
\end{lemma}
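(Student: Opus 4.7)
The plan is to work throughout with homogeneous correspondents in $\mathbb{Z}^{n+1}$, identifying $F$ with its primitive rank-$(e+1)$ sublattice $L_F:=\sum_{v\in F\cap\Qn}\mathbb{Z}\tilde v$. The case $e=n$ is Lemma~\ref{Lemma:HomogeneousSimplex} together with the convention $c_F=1$, so assume $e<n$. Lemma~\ref{Lemma:HomogeneousSimplex} furnishes $v_0,\dots,v_e\in F$ with $\den(v_i)=d_F$ and $(\tilde v_0,\dots,\tilde v_e)$ a basis of $L_F$. Definition~\ref{definition:c} supplies some $v^*\in\Qn\setminus F$ with $\den(v^*)=c_F$ together with $v_0',\dots,v_e'\in F\cap\Qn$ such that $\conv(v^*,v_0',\dots,v_e')$ is regular. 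A short check shows $(\tilde v_0',\dots,\tilde v_e')$ is itself a basis of $L_F$: extending $(\tilde v^*,\tilde v_0',\dots,\tilde v_e')$ to a basis of $\mathbb{Z}^{n+1}$, any $x\in L_F$ projects to $0$ in $\mathbb{Z}^{n+1}/L_F$, forcing $x$ to be an integer combination of $\tilde v_0',\dots,\tilde v_e'$ alone. Swapping bases of $L_F$ preserves regularity, so $\conv(v_0,\dots,v_e,v^*)$ is a regular $(e+1)$-simplex; set $v_{e+1}:=v^*$.

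Next I extend $(\tilde v_0,\dots,\tilde v_{e+1})$ to any basis $(\tilde v_0,\dots,\tilde v_{e+1},w_{e+2},\dots,w_n)$ of $\mathbb{Z}^{n+1}$, and set $M:=\mathbb{Z}\tilde v_0+\cdots+\mathbb{Z}\tilde v_{e+1}$. I then wish to replace each $w_i$ ($i>e+1$) by $w_i':=w_i+m_i$ with $m_i\in M$ chosen so that $\ell(w_i')=c_F$, where $\ell$ denotes the last-coordinate projection. Since $\ell(M)=d_F\mathbb{Z}+c_F\mathbb{Z}=\gcd(d_F,c_F)\mathbb{Z}$, this replacement succeeds provided $\gcd(d_F,c_F)=1$. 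Once granted, each $w_i'$ remains a basis vector of $\mathbb{Z}^{n+1}$, hence is primitive, and it lies outside $L_F$ (since $w_i\notin M\supseteq L_F$ and the cosets $w_i+M$ and $M$ are disjoint); so the associated rational point $v_i$ satisfies $\den(v_i)=c_F$ and $v_i\notin F$. Because $(\tilde v_0,\dots,\tilde v_n)$ is still a basis of $\mathbb{Z}^{n+1}$, $\conv(v_0,\dots,v_n)$ is the required regular $n$-simplex.

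The main obstacle is the identity $\gcd(d_F,c_F)=1$, which I plan to prove by contradiction using the minimality built into Definition~\ref{definition:c}. Suppose $g:=\gcd(d_F,c_F)>1$. Surjectivity of $\ell$ on $\mathbb{Z}^{n+1}$ yields $\gcd\bigl(g,\ell(w_{e+2}),\dots,\ell(w_n)\bigr)=1$, so some $\ell(w_j)$ with $j>e+1$ is coprime to $g$. Modifying $w_j$ by an integer combination of the remaining basis vectors alters $\ell(w_j)$ by an arbitrary element of $d'\mathbb{Z}$, where $d':=\gcd\bigl(d_F,c_F,\ell(w_{e+2}),\dots,\widehat{\ell(w_j)},\dots,\ell(w_n)\bigr)$ divides $g$ and is coprime to $\ell(w_j)$. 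Hence the last coordinate of the modified basis element (possibly after a sign flip) can be reduced to some positive integer $r<g\leq c_F$. The resulting vector is primitive, lies outside $L_F$, and together with $\tilde v_0,\dots,\tilde v_e$ determines a regular $(e+1)$-simplex whose off-$F$ vertex has denominator $r<c_F$, contradicting the minimality of $c_F$.
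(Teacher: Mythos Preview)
Your argument is correct, but it takes a genuinely different path from the paper's. The paper never invokes the identity $\gcd(d_F,c_F)=1$ here (that is deferred to Lemma~\ref{lemma:expedient}). Instead, after obtaining the basis $(\tilde v_0,\dots,\tilde v_e,\tilde w_{e+1},\dots,\tilde w_n)$ with $\den(w_{e+1})=c_F$, it performs a single Euclidean reduction: for each $i>e+1$ with $\den(w_i)>c_F$ it subtracts the unique multiple $m_i\tilde w_{e+1}$ so that the new last coordinate lands in $(0,c_F]$, and then observes that the minimality in Definition~\ref{definition:c} forces this value to be exactly $c_F$. No coprimality is needed; the key is that reducing modulo $c_F$ and appealing to minimality already pins down the denominator.

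Your route instead establishes $\gcd(d_F,c_F)=1$ internally (via the surjectivity of the last-coordinate map and a second minimality argument), and then uses it to adjust each remaining last coordinate freely. This is longer but has the side benefit of proving the coprimality of $d_F$ and $c_F$ as a by-product, anticipating part of Lemma~\ref{lemma:expedient}. One small wording slip: in your justification that $(\tilde v_0',\dots,\tilde v_e')$ is a basis of $L_F$, ``projects to $0$ in $\mathbb{Z}^{n+1}/L_F$'' is trivially true and not what does the work. The actual point is that $\mathbb{Z}^{n+1}/\langle\tilde v_0',\dots,\tilde v_e'\rangle$ is torsion-free (these vectors extend to a basis), so the finite group $L_F/\langle\tilde v_0',\dots,\tilde v_e'\rangle$ must vanish; equivalently, $\conv(v_0',\dots,v_e')$ is itself a regular $e$-simplex in $F$, and the paper's own calculation (used in the proof of Lemma~\ref{Lemma:CalculationofD}) gives $L_F=\mathbb{Z}\tilde v_0'+\cdots+\mathbb{Z}\tilde v_e'$ directly.
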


\begin{proof}
In case  $e=n$, 
the result follows directly from Lemma~\ref{Lemma:HomogeneousSimplex}. 
In case $e<n$,  
let $w_0,\ldots,w_n\in \Qn$ be such that $w_0,\ldots,w_e\in F$, $\conv(w_0,\ldots,w_n)$ is regular,  and $\den(w_{e+1})=c_F$.
  Lemma~\ref{Lemma:HomogeneousSimplex}
  yields rational points  $v_0,\ldots,v_e\in F$ such that
  $\conv(v_0,\ldots,v_e)$ is regular
  and \,\, $\den(v_i)=d_F$ for each $0\leq i\leq e$.
  For  every  $v\in F\cap\Qn$ we can write 
 $\tilde v\in \Zed\tilde v_0+\cdots+ \Zed\tilde v_e$, whence in particular $\Zed\tilde v_0+\cdots+ \Zed\tilde v_e=\Zed\tilde w_0+\cdots+ \Zed\tilde w_e$. Therefore, $\{\tilde v_0,\ldots,\tilde v_e,\tilde w_{e+1},\ldots,\tilde w_n\}$ 
  is a base of $\Zed^{n+1}$,  and the $n$-simplex
  $T_0=\conv(v_0,\ldots,v_e,w_{e+1},\ldots,w_n)$ is regular.
In order  to obtain from $w_{e+1},\ldots,w_n$ the points
  $v_{e+1},\ldots,v_n$ satisfying  (iii)-(iv), we preliminarily set 
 $v_{e+1}=w_{e+1}.$
Similarly,  for each  $i=e+2,\dots,n$ such that $\den(w_i)=c_F$
we set $v_i=w_i.$ 
If $c_F=\den(w_{e+2})=\dots=\den(w_{n})$,  we are done.
 Otherwise, for all $i\in\{e+2,\ldots, n\}$ with   \,\,$\den(w_i)>c_F$,
 letting the integer  $m_i$ be the unique solution of  
   $m_i\cdot c_F<\den(w_i)\leq (m_i+1)\cdot c_F$,  we
let   the rational point  $v_i\in\Rn$ be 
uniquely determined by the stipulation 
    $\tilde v_i=\tilde w_i-m_i \cdot\tilde w_{e+1}$.  We then have
$
c_F\leq\den(v_i)=\den(w_i)- m_i\cdot \den(w_{e+1})=\den(w_i)- m_i\cdot c_F\leq c_F.
$
The $n$-simplex  $T=\conv(v_0,\dots,v_n)$ obtained from 
$T_0$ in this way is regular, 
and has the desired properties.
\end{proof}

\medskip
The following theorem yields the classification of $(\GLnZ)$-orbits of
 rational affine spaces: the complete classifier is just a triplet of
 positive integers. As usual, for every affine space
 $F\subseteq \Rn$,
  $\dim(F)$ denotes its dimension.

\medskip
\begin{theorem} 
\label{Theorem:ClassAffineSpace}
Let $F,G\subseteq \Rn$ be non-empty rational affine spaces. Then
 following conditions are equivalent:
\begin{itemize}
\item[(i)] There exists $\gamma\in\GLnZ$ such that $\gamma(F)=G$;

\smallskip
\item[(ii)] $(\dim(F),d_F,c_F)=(\dim(G),d_G,c_G)$.
\end{itemize} 
\end{theorem}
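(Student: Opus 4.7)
The plan is to derive the direction (i)$\Rightarrow$(ii) from the invariance of denominators and regularity under $\GLnZ$, and to derive (ii)$\Rightarrow$(i) by constructing, on both sides, a regular $n$-simplex prescribed up to Lemma \ref{Lemma:GammaFromSimplex} by the triple $(\dim,d,c)$.

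For (i)$\Rightarrow$(ii), let $\gamma\in\GLnZ$ with $\gamma(F)=G$. Any affine bijection preserves dimension, so $\dim(F)=\dim(G)$. By Lemma \ref{Lemma:GammaFromSimplex}, $\gamma$ preserves denominators of rational points, and it restricts to a bijection $F\cap\Qn\to G\cap\Qn$; taking minima of denominators gives $d_F=d_G$. For the invariant $c_F$, observe that $\gamma$ sends regular simplexes to regular simplexes: if $\conv(v,v_0,\dots,v_e)$ is regular with $v_0,\dots,v_e\in F\cap\Qn$ and $v\in\Qn\setminus F$ realising $\den(v)=c_F$, then $\gamma(v_0),\dots,\gamma(v_e)\in G$, $\gamma(v)\in\Qn\setminus G$, $\den(\gamma(v))=c_F$, and $\conv(\gamma(v),\gamma(v_0),\dots,\gamma(v_e))$ is regular because $\gamma$ induces an element of $\mathsf{GL}(n+1,\Zed)$ on the homogeneous correspondents and therefore maps bases of $\Zed^{n+1}$ to bases. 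Hence $c_G\leq c_F$, and the symmetric argument applied to $\gamma^{-1}$ gives equality. (The edge case $\dim(F)=n$ is immediate since $c_F=c_G=1$ by definition.)

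For (ii)$\Rightarrow$(i), let $e=\dim(F)=\dim(G)$, $d=d_F=d_G$, $c=c_F=c_G$. Apply Lemma \ref{Lemma:FullSimplexControlledDenominators} to $F$ to obtain rational points $v_0,\dots,v_n$ with $v_0,\dots,v_e\in F$, $\conv(v_0,\dots,v_n)$ regular, $\den(v_i)=d$ for $i\leq e$ and $\den(v_i)=c$ for $i>e$. Do the same for $G$, producing $w_0,\dots,w_n$ with the matching denominator pattern. By Lemma \ref{Lemma:GammaFromSimplex} (applied to the two regular $n$-simplexes) there exists $\gamma\in\GLnZ$ with $\gamma(v_i)=w_i$ for all $i\in\{0,\dots,n\}$. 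Since $\{v_0,\dots,v_e\}$ and $\{w_0,\dots,w_e\}$ are affinely independent subsets of the $e$-dimensional affine spaces $F$ and $G$ respectively, $F=\aff(v_0,\dots,v_e)$ and $G=\aff(w_0,\dots,w_e)$; as $\gamma$ is affine, $\gamma(F)=G$.

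The one point that deserves care, and is really the only conceptual step, is the stability of regularity under $\GLnZ$ used in (i)$\Rightarrow$(ii); this is essentially the same lift from an element $x\mapsto\mathcal Ux+t$ of $\GLnZ$ to the block matrix in $\mathsf{GL}(n+1,\Zed)$ acting on homogeneous correspondents that already appears in the proof of Lemma \ref{Lemma:GammaFromSimplex}. Once this is in hand, both implications are short applications of the preceding lemmas, with Lemma \ref{Lemma:FullSimplexControlledDenominators} doing the real work of manufacturing the invariants on the nose.
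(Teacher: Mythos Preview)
Your proof is correct and follows essentially the same route as the paper: both directions rest on Lemma~\ref{Lemma:GammaFromSimplex} and Lemma~\ref{Lemma:FullSimplexControlledDenominators} in exactly the way you describe. Your argument for (i)$\Rightarrow$(ii) is slightly more explicit than the paper's about why $\gamma$ preserves regularity (via the lift to $\mathsf{GL}(n+1,\Zed)$), but this is just spelling out what the paper leaves implicit.
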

\begin{proof}
(i)$\Rightarrow$(ii) 
Since $\gamma\in\GLnZ$ is a bijective affine transformation, $\dim(F)=\dim(\gamma(F))=\dim(G)$. The identities $d_F=d_G$ and $c_F=c_G$ follow directly from Definitions~\ref{Def:DF} and~\ref{definition:c}, because   $\gamma$ preserves denominators (Lemma~\ref{Lemma:GammaFromSimplex}).

(ii)$\Rightarrow$(i). Let $v_0,\ldots,v_n\in  \Qn$ satisfy conditions (i)-(iv) of Lemma~\ref{Lemma:FullSimplexControlledDenominators} relative to $F$. Similarly let $w_0,\ldots,w_n\in  \Qn$ satisfy the same conditions  relative to $G$. By hypothesis, $\den(v_i)=d_F=d_G=\den(w_i)$ for each  
$0\leq i\leq \dim(F)=\dim(G)$ and  $\den(v_i)=c_F=c_G=\den(w_i)$ for each 
 $i\in \{0,\ldots,n\}\setminus\{0,\ldots, \dim(F)\}$.
  Lemma~\ref{Lemma:GammaFromSimplex}
yields  $\gamma\in\GLnZ$  such that $\gamma(v_i)=w_i$ for each $i\in\{0,\ldots, n\}$. Since $\gamma$ is an affine transformation and $F=\aff(v_0,\ldots,v_e)$, then $G=\aff(w_0,\ldots, w_e)=\aff(\gamma(v_0),\ldots,\gamma(v_e))=\gamma(F)$.
\end{proof}

As shown by the next three results,  
in many cases the parameter $c_F$ 
has 
no  role in the classification of  $(\GLnZ)$-orbits
of rational affine spaces.

\begin{lemma}
\label{lemma:expedient}
Let $F\subseteq \Rn$ be a  rational affine space. If $0\leq \dim(F)<n-1$
 then $c_F=1$. If $\dim(F)=n-1$, then $\gcd(c_F,d_F)=1$ and $1\leq c_F\leq 
 \max(1,d_F/2)$.
\end{lemma}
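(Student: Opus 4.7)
My plan is to split by whether $\dim(F)<n-1$ or $\dim(F)=n-1$. Both cases start identically: by Lemma~\ref{Lemma:HomogeneousSimplex} I pick $v_0,\ldots,v_e\in F$ with $\conv(v_0,\ldots,v_e)$ regular and $\den(v_i)=d_F$, and I extend $\{\tilde v_0,\ldots,\tilde v_e\}$ to a $\Zed$-basis $\{\tilde v_0,\ldots,\tilde v_e,b_{e+1},\ldots,b_n\}$ of $\Zed^{n+1}$, with $\beta_j$ the last coordinate of $b_j$. Since $(0,\ldots,0,1)\in\Zed^{n+1}$ lies in this lattice, projecting onto the last coordinate forces $\gcd(d_F,\beta_{e+1},\ldots,\beta_n)=1$. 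A rational point $w\in\Qn\setminus F$ yields a regular simplex $\conv(w,v_0,\ldots,v_e)$ exactly when $\tilde w$ completes $\{\tilde v_0,\ldots,\tilde v_e\}$ to a partial basis of $\Zed^{n+1}$; writing $\tilde w=\sum_{i\leq e}a_i\tilde v_i+\sum_{j>e}c_j b_j$, the basis condition reads $\gcd(c_{e+1},\ldots,c_n)=1$, and $\den(w)$ equals $\sum_{i}a_i d_F+\sum_j c_j\beta_j$.

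For $\dim(F)=e<n-1$, I aim to realize $\den(w)=1$, which amounts to $\sum_j c_j\beta_j\equiv 1\pmod{d_F}$. A Bezout solution $(c_j^0)$ exists thanks to $\gcd(d_F,\beta_{e+1},\ldots,\beta_n)=1$, and shifting the $c_j^0$ by integer multiples of $d_F$ preserves the congruence. The hypothesis $n-e\geq 2$ gives enough coordinates to enforce primitivity: after ensuring at least one $c_j^0\neq 0$ (automatic when $d_F\geq 2$, since an all-zero tuple violates the congruence), I modify one further component by a multiple of $d_F$ via a Chinese Remainder Theorem argument, separately handling primes that divide $d_F$ (which cannot simultaneously divide all other $c_j^0$, by the congruence and $\gcd(d_F,c_j^0,\ldots)=1$) and primes coprime to $d_F$ (for which the shift by $d_F$ ranges over all residues). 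The resulting $(c_j)$ has gcd $1$, so $\den(w)=1$ and $c_F=1$.

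For $\dim(F)=n-1$, Lemma~\ref{Lemma:FullSimplexControlledDenominators} supplies $v_0,\ldots,v_{n-1},v$ forming a regular $n$-simplex with $\den(v_i)=d_F$ for $i<n$ and $\den(v)=c_F$. The matrix whose columns are $\tilde v_0,\ldots,\tilde v_{n-1},\tilde v$ has determinant $\pm 1$; cofactor expansion along the last row $(d_F,\ldots,d_F,c_F)$ yields integers $A,B$ with $d_F\cdot A+c_F\cdot B=\pm 1$, whence $\gcd(c_F,d_F)=1$. For the inequality I return to the basis-completer $b_n$ and observe that every integer vector completing $\{\tilde v_0,\ldots,\tilde v_{n-1}\}$ to a basis of $\Zed^{n+1}$ has the form $\pm b_n+\sum a_i\tilde v_i$ (because the quotient $\Zed^{n+1}/L$ has rank $1$), so its last coordinate lies in $\pm\beta_n+d_F\Zed$. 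When $d_F\geq 2$, the smallest positive value in this set is $\min(\beta_n\bmod d_F,\,(-\beta_n)\bmod d_F)\leq d_F/2$, while $\gcd(\beta_n,d_F)=1$ guarantees this minimum is positive; when $d_F=1$ one reaches last coordinate $1$ directly. Hence $c_F\leq\max(1,d_F/2)$, and $c_F\geq 1$ is immediate since denominators are positive integers.

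The step I expect to be the main obstacle is the primitivity construction in the first case: producing a tuple $(c_j)$ that simultaneously satisfies the congruence modulo $d_F$ and has trivial gcd. The rank-$\geq 2$ quotient $\Zed^{n+1}/L\cong\Zed^{n-e}$ supplies the essential freedom, but the CRT argument must carefully separate primes dividing $d_F$ from those coprime to $d_F$, leveraging the ambient condition $\gcd(d_F,\beta_{e+1},\ldots,\beta_n)=1$ to handle the former.
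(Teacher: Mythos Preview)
Your argument is correct, but it takes a different route from the paper's, particularly in the case $e=\dim(F)<n-1$.

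The paper begins uniformly with Lemma~\ref{Lemma:FullSimplexControlledDenominators}, obtaining a full regular $n$-simplex $\conv(v_0,\ldots,v_n)$ with $\den(v_i)=d_F$ for $i\leq e$ and $\den(v_i)=c_F$ for $i>e$. Since $\{\tilde v_0,\ldots,\tilde v_n\}$ is a basis of $\Zed^{n+1}$, projection to the last coordinate immediately gives $\gcd(d_F,c_F)=1$. For $e<n-1$ the paper then exploits the presence of \emph{two} vertices $v_{e+1},v_n$ of denominator $c_F$: setting $\tilde v=\tilde v_0-\tilde v_n$ one gets a new basis vector with last coordinate $d_F-c_F$, and reducing this modulo $\tilde v_{e+1}$ (Euclidean-style) forces, by minimality of $c_F$, the relation $d_F-c_F\equiv 0\pmod{c_F}$; hence $c_F\mid d_F$, and combined with $\gcd(d_F,c_F)=1$ one concludes $c_F=1$. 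For $e=n-1$ the same subtraction $\tilde v=\tilde v_0-\tilde v_n$ already yields $c_F\leq d_F-c_F$, i.e.\ $c_F\leq d_F/2$.

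Your approach instead builds the witness for $c_F=1$ directly: you parametrize all candidates $\tilde w$ by their coordinates $(c_{e+1},\ldots,c_n)$ in $\Zed^{n+1}/L\cong\Zed^{n-e}$, reduce the condition $\den(w)=1$ to a congruence modulo $d_F$, and use a Bezout/CRT argument (crucially needing $n-e\geq 2$) to produce a primitive solution. This is sound, though the case analysis (e.g.\ when all but one $c_j^0$ vanish, or when $d_F=1$) requires some care that you have correctly anticipated. For $e=n-1$ your cofactor expansion and analysis of the rank-one quotient are essentially equivalent to the paper's reasoning, just phrased differently.

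What each buys: the paper's reduction trick is shorter and avoids number-theoretic machinery, leaning instead on the structure already provided by Lemma~\ref{Lemma:FullSimplexControlledDenominators}. Your argument is more self-contained (it does not invoke Lemma~\ref{Lemma:FullSimplexControlledDenominators} for the $e<n-1$ case) and makes the role of the hypothesis $n-e\geq 2$ very transparent---it is exactly the freedom needed to achieve primitivity while respecting the congruence.
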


\begin{proof}
Choose points  $v_0,\ldots,v_n\in \Rn$
satisfying  conditions (i)-(iv) in Lemma~\ref{Lemma:FullSimplexControlledDenominators}. Since $\tilde v_0,\ldots,\tilde v_n$ is a base of $\Zed^{n+1}$, some
 linear combination of $\den( v_0),\ldots,\den( v_n)$ with integer coefficients 
equals~$1$. Thus  $\gcd(d_F,c_F)=1$.  
A moment's reflection shows that 
 $c_F\leq d_F$. So, in case  $d_F=1$ then $c_F=1$, and the result follows. 
On the other hand, in case  $d_F> 1$, from
 $\gcd(d_F,c_F)=1$ and $c_F\leq d_F$, it follows that $c_F<d_F$.
 
Let $e=\dim(F)$.  
 
 If $e<n-1$, then $v_{e+1}\neq v_{n}$ and (by Lemma~\ref{Lemma:FullSimplexControlledDenominators}(iii))
   $\den(v_{e+1})=
 \den(v_{n})=c_F$. Let $v \in \Qn$ be such that $\tilde v =\tilde v_{0}-\tilde v_{n}$.
  Then 
$\{\tilde v_0,\ldots,\tilde v_{n-1},\tilde v\}$ is a base of $\Zed^{n+1}$. 
By Definition~\ref{definition:c},\, $c_F\leq \den(v)=d_F-c_F$. Let $m$
  be the unique integer 
such that $m \cdot  c_F<\den(v) \leq (m+1)  \cdot  c_F$. 
Also let  the rational point  $w\in\Rn$ be 
defined by 
    $\tilde w=\tilde v-m \cdot\tilde v_{e+1}$. 
       Observe that 
       $\{\tilde v_0,\ldots,\tilde v_{n-1},\tilde w\}$ is a base of $\Zed^{n+1}$.    
The  minimality property of  $c_F$\,entails
$c_F\leq\den(w)=\den(v)- m\cdot \den(v_{e+1})=\den(v)- m\cdot c_F\leq c_F$.
 Then  
$\den(v)=d_F-c_F=(m+1)  \cdot  c_F$, whence  $d_F=(m+2)\cdot c_F$. 
Since $\gcd(d_F,c_F)=1$, then $c_F=1$.

If   $e=n-1$, then $\den(v_n)=c_F$. 
 Let $v\in \Qn$ be such that $\tilde v=\tilde v_0-\tilde v_{n}$.
 Then 
$\{\tilde v_0,\ldots,\tilde v_{n-1},\tilde v\}$ is a base of $\Zed^{n+1}$. 
The  minimality property 
of  $c_F$ yields the inequality $c_F\leq \den(v)=d_F-c_F$, that is,  $c_F\leq d_F/2$.
\end{proof}

\begin{corollary}
Let $F,G\subseteq \Rn$ be non-empty rational affine spaces. Suppose $\dim(F)=\dim(G)\neq n-1$. Then there exists $\gamma\in\GLnZ$ such that $\gamma(F)=G$
iff $d_F=d_G$.
\end{corollary}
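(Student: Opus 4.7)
The plan is to reduce the corollary directly to Theorem~\ref{Theorem:ClassAffineSpace}, using Lemma~\ref{lemma:expedient} to kill the $c_F$-parameter under the dimensional hypothesis.

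First I would observe that the forward direction is immediate: if $\gamma\in\GLnZ$ carries $F$ onto $G$, then Theorem~\ref{Theorem:ClassAffineSpace}(i)$\Rightarrow$(ii) already gives $d_F=d_G$ (along with the other two equalities).

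For the converse, assume $\dim(F)=\dim(G)\neq n-1$ and $d_F=d_G$. By Theorem~\ref{Theorem:ClassAffineSpace} it suffices to verify that $c_F=c_G$ as well. Here I would split into the two allowed dimension cases. If $\dim(F)=\dim(G)=n$, then Definition~\ref{definition:c} sets $c_F=c_G=1$ by fiat. If $\dim(F)=\dim(G)<n-1$, then Lemma~\ref{lemma:expedient} yields $c_F=1=c_G$. In either case $c_F=c_G$, so the triples $(\dim(F),d_F,c_F)$ and $(\dim(G),d_G,c_G)$ agree, and Theorem~\ref{Theorem:ClassAffineSpace}(ii)$\Rightarrow$(i) produces the desired $\gamma\in\GLnZ$ with $\gamma(F)=G$.

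There is essentially no obstacle here, since Lemma~\ref{lemma:expedient} has already done the technical work of showing that $c_F=1$ whenever $\dim(F)\notin\{n-1\}$ and $\dim(F)\le n$ (the case $\dim(F)=n$ being the trivial end of Definition~\ref{definition:c}, and $\dim(F)<n-1$ being the nontrivial clause of the lemma). The only small point worth stressing in the write-up is that the hypothesis $\dim(F)\neq n-1$ is exactly what excludes the one case in which $c_F$ can genuinely take a value other than $1$, and hence is exactly what makes $d_F$ alone a complete invariant.
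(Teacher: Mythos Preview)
Your proposal is correct and matches the paper's intended argument: the corollary is stated without proof in the paper, but it is placed immediately after Lemma~\ref{lemma:expedient} precisely because it follows from Theorem~\ref{Theorem:ClassAffineSpace} once that lemma forces $c_F=c_G=1$ under the hypothesis $\dim(F)\neq n-1$. Your case split (using Definition~\ref{definition:c} for $\dim=n$ and Lemma~\ref{lemma:expedient} for $\dim<n-1$) is exactly right.
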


\begin{corollary}
Let $F,G\subseteq \Rn$ be non-empty rational affine spaces, with 
   $d_F=d_G\in\{1,2,3,4,6\}$. Then 
there exists $\gamma\in\GLnZ$ such that $\gamma(F)=G$
iff $\dim(F)=\dim(G)$.
\end{corollary}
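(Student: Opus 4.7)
The plan is a direct reduction to Theorem~\ref{Theorem:ClassAffineSpace}. Since the forward direction follows from that theorem (which asserts that any $\gamma\in\GLnZ$ with $\gamma(F)=G$ must preserve dimension, $d_F$, and $c_F$), I only need to prove the reverse implication. By hypothesis $d_F=d_G$ and $\dim(F)=\dim(G)$, so invoking Theorem~\ref{Theorem:ClassAffineSpace} it suffices to establish that $c_F=c_G$.

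To achieve this, I would show the stronger fact that for every non-empty rational affine space $F\subseteq\Rn$ with $d_F\in\{1,2,3,4,6\}$, one has $c_F=1$. Let $e=\dim(F)$. If $e=n$, then $c_F=1$ by Definition~\ref{definition:c}. If $e<n-1$, then $c_F=1$ by the first assertion of Lemma~\ref{lemma:expedient}. The only remaining case is $e=n-1$, where Lemma~\ref{lemma:expedient} gives the two constraints
\[
\gcd(c_F,d_F)=1 \quad \text{and} \quad 1\leq c_F\leq \max(1,d_F/2).
\]
I would then verify by inspection of the five allowed values of $d_F$ that these two constraints force $c_F=1$: for $d_F=1$ and $d_F=2$ the upper bound already equals $1$; for $d_F=3$ we get $c_F\leq 3/2$, hence $c_F=1$; for $d_F=4$ the bound gives $c_F\in\{1,2\}$ but coprimality with $4$ excludes $2$; for $d_F=6$ the bound yields $c_F\in\{1,2,3\}$ but coprimality with $6$ excludes $2$ and $3$.

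Since the same argument applies verbatim to $G$ (using $d_G=d_F$ and $\dim(G)=\dim(F)$), we obtain $c_G=1=c_F$, and Theorem~\ref{Theorem:ClassAffineSpace} then yields the desired $\gamma\in\GLnZ$ with $\gamma(F)=G$.

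There is essentially no obstacle here: the whole content of the corollary is the observation that $\{1,2,3,4,6\}$ is exactly the set of positive integers $d$ for which the simultaneous conditions ``coprime to $d$'' and ``$\leq d/2$'' admit only the value $1$ (equivalently, $\phi(d)\leq 2$), which makes the third coordinate $c_F$ in the invariant of Theorem~\ref{Theorem:ClassAffineSpace} redundant in these low-denominator cases.
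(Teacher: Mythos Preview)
Your proposal is correct and is precisely the argument the paper has in mind: the corollary is stated without proof, intended as an immediate consequence of Theorem~\ref{Theorem:ClassAffineSpace} together with Lemma~\ref{lemma:expedient}, and your case analysis on $d_F\in\{1,2,3,4,6\}$ is exactly the verification that the constraints $\gcd(c_F,d_F)=1$ and $1\le c_F\le\max(1,d_F/2)$ force $c_F=1$.
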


The following result will find repeated use  in the sequel:

\begin{proposition}\label{Prop:Alternative}
Let $n\in\{1,2,\ldots\}$ and $e,d,c$ be  integers such that
\begin{itemize}
\item[(i)] $e\in\{0,\ldots,n-1\}$, $d\in\{1,2,\ldots\}$, and $c\in\{1,\ldots,\max(1,d/2)\}$;
\item[(ii)] $\gcd(d,c)=1$;
\item[(iii)] if $e\neq n-1$, then $c=1$.
\end{itemize}
Then for some integer
  $p\in \{1,\dots,d\}$ with $\gcd(p,d)=1$,   the rational affine space 
\[\textstyle F=\{(y_1,\ldots,y_n)\in\Rn\mid 
  y_{e+1}=\dots=  y_{n}
  =\frac{p}{d}\}
 \]
satisfies $(\dim(F),d_F,c_F)=(e,d,c)$.
\end{proposition}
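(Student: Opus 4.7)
The plan is to take $p\in\{1,\ldots,d\}$ to be the multiplicative inverse of $c$ modulo $d$ (which exists by hypothesis (ii); in the degenerate case $d=1$ just take $p=1$), so that automatically $\gcd(p,d)=1$ and $pc\equiv 1\pmod{d}$. With $F$ defined as in the statement, $\dim(F)=e$ is immediate from the $n-e$ defining equations, and $d_F=d$ follows at once: any $v\in F\cap\Qn$ has a coordinate equal to $p/d$, which is already in lowest terms, so $d\mid\den(v)$; meanwhile the point $(0,\ldots,0,p/d,\ldots,p/d)$ attains $\den=d$.

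It remains to compute $c_F$. If $e<n-1$, hypothesis (iii) gives $c=1$ and Lemma~\ref{lemma:expedient} yields $c_F=1$ for free. The substantive case is $e=n-1$, where $F$ is the hyperplane $\{y_n=p/d\}$. The key tool I would use is the $\Zed$-linear map $L\colon\Zed^{n+1}\to\Zed$ defined by $L(z_1,\ldots,z_{n+1})=dz_n-pz_{n+1}$; since $\gcd(p,d)=1$ it is surjective, and a direct computation shows that for a rational $v\in\Rn$ the homogeneous correspondent $\tilde v$ lies in $\ker L$ if and only if $v\in F$.

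For the lower bound $c_F\geq c$, suppose $\conv(v_0,\ldots,v_n)$ is a regular $n$-simplex with $v_0,\ldots,v_{n-1}\in F$ and $v_n\notin F$. Then $\tilde v_0,\ldots,\tilde v_{n-1}\in\ker L$, so the requirement that $\{\tilde v_0,\ldots,\tilde v_n\}$ be a $\Zed$-basis of $\Zed^{n+1}$, together with surjectivity of $L$, forces $|L(\tilde v_n)|=1$; unwinding this gives $p\cdot\den(v_n)\equiv\pm 1\pmod{d}$, and combined with $p\equiv c^{-1}\pmod{d}$ it yields $\den(v_n)\equiv\pm c\pmod{d}$, hence $\den(v_n)\geq\min(c,d-c)=c$ since $c\leq d/2$. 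For the upper bound $c_F\leq c$ I would exhibit an explicit regular simplex: take $v_0=(0,\ldots,0,p/d)$, $v_i=v_0+(1/d)\mathbf{e}_i$ for $i=1,\ldots,n-1$ (with $\mathbf{e}_i$ the $i$-th standard basis vector of $\Rn$), and $v_n=(0,\ldots,0,\beta/c)$ where $\beta\in\Zed$ is chosen so that $pc-d\beta=1$. Then $v_0,\ldots,v_{n-1}$ have denominator $d$ and lie in $F$; $\den(v_n)=c$ because $\gcd(\beta,c)$ divides $pc-d\beta=1$; and $v_n\notin F$ because $\beta/c\neq p/d$. A routine row reduction of the matrix of homogeneous correspondents brings it to block form, consisting of an $(n-1)\times(n-1)$ identity block and a $2\times 2$ block of determinant $pc-d\beta=\pm 1$, confirming regularity. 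The only genuinely technical step is the determinant computation; no deep obstacle arises, and the proposition follows.
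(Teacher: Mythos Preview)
Your argument is correct, and the overall architecture---choose $p$ with $pc\equiv 1\pmod d$, verify $\dim(F)=e$ and $d_F=d$, then exhibit the regular simplex with outside vertex $(0,\ldots,0,\beta/c)$ to get $c_F\leq c$---matches the paper's. Two places diverge. First, for $e<n-1$ you invoke Lemma~\ref{lemma:expedient} to obtain $c_F=1$, whereas the paper builds an explicit regular $n$-simplex containing the origin (denominator $1$) together with the $v_i$'s and some standard basis vectors; your shortcut is legitimate since Lemma~\ref{lemma:expedient} does not depend on this proposition. Second, for the lower bound when $e=n-1$, your surjective linear form $L(z)=dz_n-pz_{n+1}$ is a genuinely cleaner device: the paper instead fixes its particular simplex $\conv(v_0,\ldots,v_{n-1},v)$, takes an arbitrary competitor $w$ with $\den(w)=c_F$, writes $\tilde v$ in the basis $\{\tilde v_0,\ldots,\tilde v_{n-1},\tilde w\}$, and extracts the congruence $c\equiv c_F\pmod d$ from the last coordinate, then appeals to the bound $c_F\leq d/2$ from Lemma~\ref{lemma:expedient}. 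Your approach bypasses that and gives $\den(v_n)\equiv\pm c\pmod d$ directly for \emph{any} admissible simplex, which is both shorter and more conceptual. One small tradeoff: the paper later (in the proof of Theorem~\ref{theorem:classification}) reaches back into this proof to use the specific values $p=1$ when $c=1$ and the integer $q$ with $pc-qd=1$; your write-up supplies the same data (your $\beta$ is the paper's $q$), so nothing is lost.
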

\begin{proof}
By (ii), there exist $p\in \{1,\dots,d\}$ and $q=0,1,\dots$ such that
\[p\cdot c - q\cdot d=1.\]
The rational affine space  $F=\{(y_1,\ldots,y_n)\in\Rn\mid 
  y_{e+1}=\dots=  y_{n}
  =\frac{p}{d}\}$  satisfies $\dim(F)=e$ and  $\gcd(p,d)=1$.
Let the rational points $v_0,\ldots,v_{e}\in \Rn$ be defined 
 as follows, for each  $i\in\{0,\dots,e\}$ and $j\in\{1,\dots,n\}$:
\[
(v_i)_j= j \mbox{th coordinate of $v_i$}=\begin{cases}
\frac{p}{d}&\mbox{if } j\in\{e+1,\ldots, n\};\\
0&\mbox{if }j  \in\{1,\dots,e\}\setminus\{i\};\\
\frac{1}{d}&\mbox{if }  j = i.
\end{cases}
\]

\medskip
\noindent
Thus in particular, 
 $v_0=(\underbrace{0,\ldots,0}_{e \mbox{\tiny\,\, times}},
\underbrace{{p}/{d},\ldots,{p}/{d}}_{n-e \mbox{\tiny \,times}})$.
By definition,  $\den(v_0)=\cdots=\den(v_e)=d$. 

\medskip
To prove the identities $d_G=d$ and  $c_G=c$ we argue by cases.

\bigskip
\noindent{\it Case 1:} $c=1$.  We then set 
 $p=1$ and $q=0$. Let
 $\xi_1,\ldots,\xi_n$ denote the canonical base of the vector space $\Rn$,
  and  $\boldsymbol 0=(0,\ldots,0)$  the origin of $\Rn$.
  Then  the $n$-simplex
 $\conv(v_0,\ldots,v_e,\boldsymbol{0},\xi_{e+2},\ldots,\xi_{n})$ is  regular. 
 As a matter of fact,  the
   integer $(n+1)\times(n+1)$-matrix  $\mathcal T$ whose rows are
 the vectors $\tilde{v}_1, \ldots, \tilde{v}_e, \tilde{v}_0,
 \tilde{\xi}_{e+2},\ldots,\tilde{\xi}_{n}, 
 \tilde{\boldsymbol{0}},\,\,$ 
    is  lower triangular  with  all 
 diagonal elements  $=1$. By Lemma~\ref{Lemma:CalculationofD}, $d_F=\gcd(\den(v_0),\ldots,\den(v_e))=d$.  By Definition~\ref{definition:c},\, $c_F=1=c$.

\bigskip
\noindent{\it Case 2:} $c\neq 1$. By (iii), $e=n-1$. Let  the point $v\in\Rn $ be defined by
$
v=(0,\ldots,0, {q}/{c}).
$
Then  $\den(v) =c$.
Since  $p\cdot c-q\cdot d=1$,   the $n$-simplex
\, $\conv(v_0,\ldots,v_{n-1},v)$ is  
 regular. By Lemma~\ref{Lemma:CalculationofD}, $d_F=\gcd(\den(v_0),\ldots,\den(v_{n-1}))=d$.  By Definition~\ref{definition:c},\, $c_F\leq c$.
There is  $w\in\Qn$  such that $\den(w)=c_F$ and $\conv(v_0,\ldots,v_{n-1},w)$ is  
a regular $n$-simplex. 
Since  $\{\tilde{v}_1, \ldots, \tilde{v}_n,\tilde{w}\}$ 
is a basis of $\Zed^{n+1}$,
\,\,\, $\tilde{v}$ is a linear combination of 
$\{\tilde{v}_1, \ldots, \tilde{v}_n,\tilde{w}\}$ with
integer coefficients. 
Since $p\cdot c-q\cdot d=1$, 
we easily see that
 $c=c_F+k\cdot d$ for some $k\in\Zed$.  Using now
 (i) and  Lemma~\ref{lemma:expedient}
 we obtain $c_F=c$, which completes the proof.
\end{proof}

\section{$(\GLnZ)$-orbit classification}\label{Section:Class}

\noindent
For each $x=(x_1,\ldots,x_n)\in\Rn$ let
\[
\boxed{
F_x=\bigcap\{F\subseteq \Rn\mid x\in F\mbox{ and }F 
\mbox{ is a rational affine space}\}.
}
\]

\smallskip 
\noindent
Also let the group  $G_x$   be defined by 
\[
\boxed{
G_x=\Zed+x_1\Zed+\cdots+x_n\Zed.
}
\]

\smallskip 
\noindent
In other words,  $G_x$  is  the subgroup of the additive group
$\R$ generated by 1 together with the coordinates of $x$.

\begin{lemma}\label{Lemma:Necessary}
For each $x\in\Rn$ and  $\gamma\in\GLnZ$,  
$\gamma(F_x)=F_{\gamma(x)}$ and $ \,\,G_x=G_{\gamma(x)}.$
\end{lemma}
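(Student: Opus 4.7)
The plan is to handle the two identities separately and reduce each to the fact that elements of $\GLnZ$ together with their inverses preserve the relevant structures (rational affine spaces in one case, $\Zed$-linear combinations in the other).

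For the first identity $\gamma(F_x)=F_{\gamma(x)}$, the key observation is that $\gamma\in\GLnZ$ bijects the family of rational affine spaces of $\Rn$ onto itself. Writing $\gamma(z)=\mathcal{U}z+t$ with $\mathcal{U}\in\mathsf{GL}(n,\Zed)$ and $t\in\Zed^n$, a rational hyperplane $H=\{z\mid\langle h,z\rangle=r\}$ with $h\in\Qn$, $r\in\Q$ is carried by $\gamma^{-1}$ to $\{z\mid\langle \mathcal U^{T}h,z\rangle=r-\langle h,t\rangle\}$, again a rational hyperplane (since $\mathcal U^T h\in\Qn$ and $r-\langle h,t\rangle\in\Q$); the analogous fact for $\gamma$ itself follows because $\gamma^{-1}\in\GLnZ$. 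Consequently $\gamma(F_x)$ is a rational affine space containing $\gamma(x)$, so by minimality $F_{\gamma(x)}\subseteq\gamma(F_x)$. Conversely $\gamma^{-1}(F_{\gamma(x)})$ is a rational affine space containing $x$, so $F_x\subseteq\gamma^{-1}(F_{\gamma(x)})$, i.e., $\gamma(F_x)\subseteq F_{\gamma(x)}$.

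For the second identity $G_x=G_{\gamma(x)}$, I would argue directly from the formula $\gamma(x)=\mathcal U x+t$. The $i$th coordinate of $\gamma(x)$ equals $\sum_{j=1}^{n}u_{ij}x_j+t_i$, which is a $\Zed$-linear combination of $1,x_1,\dots,x_n$; together with $1$ itself, these coordinates therefore lie in the subgroup $G_x$ of $(\R,+)$. Hence $G_{\gamma(x)}\subseteq G_x$. The opposite inclusion follows at once by applying the same reasoning to $\gamma^{-1}\in\GLnZ$ and the point $\gamma(x)$: $G_x=G_{\gamma^{-1}(\gamma(x))}\subseteq G_{\gamma(x)}$.

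Neither step involves a real obstacle; the only point that deserves explicit verification is that rational hyperplanes are preserved under both $\gamma$ and $\gamma^{-1}$, which is the one place where the integrality of $\mathcal U$ and $t$ (as opposed to mere rationality) is not needed—rationality alone suffices—so the argument would in fact work for any affine map with rational coefficients and rational translation. The use of $\gamma^{-1}\in\GLnZ$ to get the reverse inclusion in each part is the small but essential ingredient that closes both arguments.
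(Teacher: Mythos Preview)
Your proof is correct and follows essentially the same route as the paper: both parts use that $\gamma$ and $\gamma^{-1}$ preserve the relevant structure (rational affine spaces for $F_x$, $\Zed$-linear combinations of $1,x_1,\dots,x_n$ for $G_x$) to obtain mutual inclusions. The only difference is that you spell out explicitly why rational hyperplanes are carried to rational hyperplanes, whereas the paper simply asserts that $\gamma$ and $\gamma^{-1}$ preserve rational affine spaces.
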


\begin{proof}
Let $y=\gamma(x)$.
Since $\gamma$ and $\gamma^{-1}$ preserve rational affine spaces we have
the inclusions
 $F_y\subseteq\gamma(F_x)$ and $F_x\subseteq \gamma^{-1}(F_y)$,
 whence  $F_y\subseteq\gamma(F_x)\subseteq\gamma(\gamma^{-1}(F_y))=F_y$.

Let $x=(x_1,\ldots,x_n)$ and $y=(y_1,\ldots,y_n)$.
 Let $\mathcal {U}
\in \Zed^{n\times n}$ and $b\in\Zed^n$  be such that 
$\gamma(z)=\mathcal {U}
z+b$
 for each $z\in \Rn$. Since 
all terms of $\mathcal {U}$ and all coordinates of  $b$ are integers, 
then $y_i\in G_x$ for each $i\in\{1,\ldots,n\}$. Therefore, 
$G_y\subseteq G_x$. 
The converse inclusion  follows replacing  $\gamma$ by~$\gamma^{-1}$.
\end{proof}

\begin{lemma} \label{Lemma:AffineSpaceClassFromPoint}
For each $x=(x_1,\ldots,x_n)\in\Rn$,
 \,
$\rank(G_x)=\dim(F_x)+1$ and\,\, $ d_{F_x}=\max\{k\in\Zed\mid 1/k\in G_x\}.$
\end{lemma}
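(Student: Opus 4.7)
My proof plan unfolds in two independent parts.

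For the first identity, I would describe $F_x$ concretely as the solution set of the rational equations it satisfies. Let $V=\{h\in\Qn\mid \langle h,x\rangle\in\Q\}$, a $\Q$-subspace of $\Qn$. A rational hyperplane $\{\langle h,z\rangle=r\}$ contains $x$ precisely when $h\in V$ and $r=\langle h,x\rangle$, so $F_x=\{z\in\Rn\mid \langle h,z\rangle=\langle h,x\rangle\text{ for all }h\in V\}$, which is a nonempty affine subspace of $\Rn$ of dimension $n-\dim_\Q V$. Now consider the $\Q$-linear map $\Phi\colon \Qn\to \R/\Q$, $h\mapsto \langle h,x\rangle+\Q$. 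By definition $\ker\Phi=V$, and the image is $(\Q+\Q x_1+\cdots+\Q x_n)/\Q$, which has $\Q$-dimension $\dim_\Q(G_x\otimes_\Z\Q)-1=\rank(G_x)-1$. Rank-nullity gives $\dim_\Q V=n-\rank(G_x)+1$, so $\dim(F_x)=\rank(G_x)-1$.

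For the second identity, write $d=d_{F_x}$, $e=\dim F_x$, and $k=\max\{k\in\Zed\mid 1/k\in G_x\}$. To show $d\leq k$, I would apply Lemma~\ref{Lemma:FullSimplexControlledDenominators} to obtain rational points $v_0,\dots,v_n$ with $v_0,\dots,v_e\in F_x$, $\den(v_i)=d$ for $i\leq e$, and $\{\tilde v_0,\dots,\tilde v_n\}$ a $\Zed$-basis of $\Zed^{n+1}$. Writing $\tilde x=(x_1,\dots,x_n,1)\in\R^{n+1}$ uniquely as $\tilde x=\sum_{i=0}^n\alpha_i\tilde v_i$, the last coordinate yields $\sum_i \alpha_i\den(v_i)=1$, while the first $n$ coordinates show $x=\sum_i \alpha_i\den(v_i)v_i$ is an affine combination. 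Since $x\in\aff(v_0,\dots,v_e)$ and $v_0,\dots,v_n$ are affinely independent, $\alpha_i=0$ for $i>e$, and the last-coordinate identity becomes $d\sum_{i=0}^e\alpha_i=1$. Letting $\xi_0,\dots,\xi_n\in\Zed^{n+1}$ be the dual $\Zed$-basis (integer-valued because the change-of-basis matrix is in $\GL_{n+1}(\Zed)$), we have $\alpha_i=\langle\xi_i,\tilde x\rangle$, so $w:=\xi_0+\cdots+\xi_e\in\Zed^{n+1}$ satisfies $\langle w,\tilde x\rangle=1/d$, exhibiting $1/d\in G_x$.

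For $k\leq d$, suppose $1/k=w_{n+1}+\sum_{j=1}^n w_j x_j$ with $w_j\in\Zed$. If all $w_j=0$ for $j\leq n$ then $k=1$ and the inequality is trivial; otherwise the equation $\sum_{j=1}^n w_j z_j=1/k-w_{n+1}$ defines a rational hyperplane containing $x$, hence containing $F_x$. Evaluating at any $v\in F_x\cap\Qn$ with $\den(v)=d$ and multiplying by $d$ gives $d/k=d w_{n+1}+\sum w_j(d v_j)\in\Zed$, so $k\mid d$. The main delicacy, and what I expect to be the trickiest step, is the lower bound $d\leq k$: one must actually produce an integer vector realising $1/d$, and this is exactly where the full regular $n$-simplex from Lemma~\ref{Lemma:FullSimplexControlledDenominators} (not merely a regular $e$-simplex in $F_x$) is indispensable, because only then does one have access to a unimodular $\Zed$-basis of $\Zed^{n+1}$ and hence to integer dual vectors.
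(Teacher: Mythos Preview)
Your argument is correct, and it follows a genuinely different route from the paper's own proof. The paper proceeds by case analysis on $e=\dim(F_x)$: when $e<n$ it invokes Theorem~\ref{Theorem:ClassAffineSpace} and Proposition~\ref{Prop:Alternative} to move $F_x$ by some $\gamma\in\GLnZ$ onto the standard slice $F=\{y:y_{e+1}=\dots=y_n=p/d\}$, and then reads off both $\rank(G_x)$ and $d_{F_x}$ from the explicit coordinates of $\gamma(x)$. Your treatment avoids this normal-form reduction entirely. For the rank identity you give a clean intrinsic argument via the $\Q$-linear map $\Qn\to\R/\Q$, $h\mapsto\langle h,x\rangle+\Q$, and rank--nullity; for $d_{F_x}$ you work directly with the regular $n$-simplex supplied by Lemma~\ref{Lemma:FullSimplexControlledDenominators} and extract $1/d$ as an integer combination of $1,x_1,\dots,x_n$ through the dual $\Zed$-basis. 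The payoff of your approach is self-containment: you never call on the classification Theorem~\ref{Theorem:ClassAffineSpace} or the explicit construction in Proposition~\ref{Prop:Alternative}, so the lemma stands on Lemmas~\ref{lemma:RegularInAffineSpace}--\ref{Lemma:FullSimplexControlledDenominators} alone. The paper's approach, in exchange, shows how the normal form already developed makes both computations immediate once one is willing to transport $x$ into a convenient coordinate system.
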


\begin{proof}
Let $e=\dim(F_x)$  and  $d=d_{F_x}$. We argue by cases.
 
\medskip
\noindent {\it Case 1:  $e=n.$} 
Then   $F_x=\Rn$. Suppose the integers
 $k_0,\ldots,k_n$  satisfy
 $k_0+k_1\cdot x_1+\cdots+k_n\cdot x_n=0$. 
Letting the  vector $v\in \mathbb Z^n$ be defined by $v=(k_1,\ldots,k_n)$,
we have the identity
$\langle x,v\rangle=-k_0$.   Since
 $F_x=\Rn$ then necessarily  $k_1=\dots=k_n=0$, and hence
 $k_0=\langle x,v\rangle=0$. 
We have just proved 
 that $\rank(G_x)=n+1$, and  $G_x\cap \Q=\Zed$. 
It follows that $\max\{k\in\Zed\mid 1/k\in G_x\}=1$. 
By Definition~\ref{Def:DF},\, $d_{F_x}=1$, which settles the proof of 
this case.
\medskip

\noindent {\it Case 2:  $e<n.$}
Then   Proposition~\ref{Prop:Alternative} 
and  Theorem~\ref{Theorem:ClassAffineSpace}
jointly yield 
  $p\in \{1,\dots,d\}$ and  $\gamma\in\GLnZ$  
  such that $\gcd(p,d)=1$ and 
  $$\gamma(F_x)=\{(y_1,\ldots,y_n)\in\Rn\mid y_i
  =\frac{p}{d}\mbox{ for each }i=e+1,\dots,n\}=F.$$
 Let $\gamma(x)=y=(y_1,\ldots,y_e,\frac{p}{d},\ldots,\frac{p}{d})$.
 By Lemma~\ref{Lemma:Necessary}, 
 $G_y=G_x$ and $F_y=\gamma(F_x)=F$.
Let $k_0,\ldots,k_e$ be such that 
$k_0 \cdot \frac{1}{d}+k_1  \cdot y_1+\cdots+k_e \cdot  y_e=0$.
Then the rational point
$v=(k_1,\ldots,k_e,0,\ldots,0)$ satisfies 
$\langle v, y\rangle\in\Q$. 
Since $F_y= F$, we also have
 $\langle v, z\rangle\in\Q$  
for each $z\in F$,  
 whence  $v=0$ and
  $k_0 \cdot \frac{1}{d}+h_1 \cdot  y_1+\cdots+k_e  \cdot y_e=k_0 \cdot \frac{1}{d}=0$. 
Having thus proved that
 $k_i=0$ for each $i\in\{0,\ldots,e\}$,
 it follows that  $\rank(G_x)=\rank (G_y)=e+1$.

Assume the integer $k>0$ satisfies  $\frac{1}{k}\in G_x=G_y$. Then 
  there exist integers $k_0,\ldots,k_e$ 
satisfying  $\frac{1}{k}=k_0 \cdot \frac{1}{d}+k_1  \cdot y_1+\cdots+k_e \cdot y_e$. 
It follows that
 $1=k  \cdot k_0 \cdot  \frac{1}{d}+k   \cdot k_1 \cdot y_1+\cdots+k \cdot
 k_e \cdot y_e$.  From  $\rank(G_y)=e+1$ it follows that  
  $k \cdot k_0=d$. Thus $k\leq d$, and  $d=\max\{k\in\Zed\mid \frac{1}{k}\in G_x\}$.
  This settles Case 2 and completes
the proof. 
\end{proof}

We are now ready to prove the main result of this paper
and its corollaries, thus providing a complete classification
of $(\GLnZ)$-orbits.

\begin{theorem}
\label{theorem:classification}
For all $x,y\in\Rn$  the following conditions are equivalent:
\begin{itemize}
\item[(i)] there exists $\gamma\in\GLnZ$ such that $\gamma(x)=y$;
\item[(ii)] $(G_x,c_{F_x})=(G_y,c_{F_y})$.
\end{itemize}
\end{theorem}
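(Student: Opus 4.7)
Direction (i)$\Rightarrow$(ii) is essentially free: if $\gamma\in\GLnZ$ satisfies $\gamma(x)=y$, Lemma~\ref{Lemma:Necessary} yields $G_x=G_y$ and $\gamma(F_x)=F_y$, whence Theorem~\ref{Theorem:ClassAffineSpace} forces $c_{F_x}=c_{F_y}$. The real content is the converse, which I would handle in three stages.

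\emph{Normalise the affine hull.} Assume (ii). Lemma~\ref{Lemma:AffineSpaceClassFromPoint} reads $\dim(F_x)$ and $d_{F_x}$ off $G_x$, so from $G_x=G_y$ and $c_{F_x}=c_{F_y}$ one obtains the triple identity $(\dim(F_x),d_{F_x},c_{F_x})=(\dim(F_y),d_{F_y},c_{F_y})$. Theorem~\ref{Theorem:ClassAffineSpace} now produces $\gamma_0\in\GLnZ$ with $\gamma_0(F_x)=F_y$; replacing $x$ by $\gamma_0(x)$ we may assume $F_x=F_y=F$. If $e:=\dim(F)<n$, a further use of Proposition~\ref{Prop:Alternative} and Theorem~\ref{Theorem:ClassAffineSpace} lets us further assume that $F=\{z\in\Rn\mid z_{e+1}=\cdots=z_n=p/d\}$ for some $p$ with $\gcd(p,d)=1$, so that $x=(x',\tfrac pd,\ldots,\tfrac pd)$ and $y=(y',\tfrac pd,\ldots,\tfrac pd)$ for some $x',y'\in\R^e$.

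\emph{Solve the problem in $\R^e$.} Because $\dim(F_x)=e$, no nontrivial rational affine relation binds the first $e$ coordinates of $x$; i.e.\ $1,x'_1,\ldots,x'_e$ are $\Q$-linearly independent, and similarly for $y'$. Setting $X=dx'$ and $Y=dy'$, the coprimality $\gcd(p,d)=1$ lets one rewrite $G_x=G_y$ as $\Zed+X_1\Zed+\cdots+X_e\Zed=\Zed+Y_1\Zed+\cdots+Y_e\Zed$. This yields $A,B\in\Zed^{e\times e}$ and $a,b\in\Zed^e$ with $Y=AX+a$ and $X=BY+b$. Substituting gives $(I-BA)X=Ba+b$, and $\Q$-linear independence of $1,X_1,\ldots,X_e$ collapses every integer coefficient to zero, so $BA=I$ (hence $A\in\mathsf{GL}(e,\Zed)$) and $Ba+b=0$. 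Equivalently $Ax'+a/d=y'$.

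\emph{Lift to $\GLnZ$---the main obstacle.} Try $\gamma\in\GLnZ$ of the form
\[
\gamma(z)=\begin{pmatrix}A & B' \\ 0 & I\end{pmatrix}z+\begin{pmatrix}b'\\ 0\end{pmatrix},
\]
with $B'\in\Zed^{e\times(n-e)}$ and $b'\in\Zed^e$ to be chosen. This $\gamma$ stabilises $F$ setwise and acts on the first $e$ coordinates as $x'\mapsto Ax'+(p/d)B'\mathbf 1+b'$, so $\gamma(x)=y$ provided $pB'\mathbf 1+db'=a$. This componentwise B\'ezout equation is solvable precisely because $\gcd(p,d)=1$, and this is the one and only place where the invariant $c_{F_x}$---encoded as the residue $p$ via Proposition~\ref{Prop:Alternative}---is actually used. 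In the edge case $e=n$ no lift is needed: the substitution argument applied directly in $\R^n$ already yields $\gamma\in\GLnZ$ with $\gamma(x)=y$.
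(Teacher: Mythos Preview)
Your argument is correct and follows the same overall strategy as the paper: reduce via Theorem~\ref{Theorem:ClassAffineSpace} and Proposition~\ref{Prop:Alternative} to a common coordinate slice $F=\{z_{e+1}=\cdots=z_n=p/d\}$, project to $\R^e$ (your $X=dx'$, $Y=dy'$ is exactly the paper's map $\eta$), solve the rank-$(e{+}1)$ problem there, and lift back to $\GLnZ$. The paper's Case~1 is literally your substitution argument $(I-BA)X=Ba+b$.

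Where you genuinely diverge is in the lift. The paper transports $\gamma_3\in\GLeZ$ back to $\Rn$ geometrically: it pushes the standard regular $e$-simplex $\conv(v_0,\dots,v_e)\subseteq F$ through $\gamma_3$ (via $\eta$) to obtain $\conv(w_0,\dots,w_e)$, then spends a nontrivial \emph{Claim} (split into the cases $c=1$ and $c\neq1$) showing this new simplex is still regular, and finally invokes Lemma~\ref{Lemma:GammaFromSimplex} to produce $\gamma_4$. Your lift is purely algebraic: write down the block-triangular ansatz $\bigl(\begin{smallmatrix}A&B'\\0&I\end{smallmatrix}\bigr)z+\bigl(\begin{smallmatrix}b'\\0\end{smallmatrix}\bigr)$ and solve the B\'ezout system $pB'\mathbf 1+db'=a$ using $\gcd(p,d)=1$. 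This is shorter and sidesteps the regularity verification entirely. The paper's route has the virtue of staying inside the regular-simplex framework used throughout the article, but yours is the more elementary proof.

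One small expository quibble: the B\'ezout step really only uses $\gcd(p,d)=1$, which Proposition~\ref{Prop:Alternative} guarantees for \emph{every} admissible $c$. The place where the hypothesis $c_{F_x}=c_{F_y}$ is genuinely consumed is earlier, in the normalisation that lands both $x$ and $y$ in the \emph{same} slice $F$ (same $p$); after that, the specific value of $c$ plays no further role.
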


\begin{proof}
(i)$\Rightarrow$(ii) follows directly from Theorem~\ref{Theorem:ClassAffineSpace} and Lemma~\ref{Lemma:Necessary}.
\medskip
(ii)$\Rightarrow$(i)    We argue by cases:

\medskip

\noindent{\it Case 1: $\rank(G_x)=n+1$.}
By Lemma~\ref{Lemma:AffineSpaceClassFromPoint},  $\dim(F_x)=\rank(G_x)-1=\rank(G_y)-1=\dim(F_y)=n$.
Since $G_x=G_y$, there are $(n+1)\times (n+1)$ integer matrices $\mathcal M$ and $\mathcal N$ such that $\mathcal M(x_1,\ldots,x_n,1)=(y_1,\ldots,y_n,1)$ and $\mathcal N(y_1,\ldots,y_n,1)=(x_1,\ldots,x_n,1)$. Thus $\mathcal M$ and $\mathcal N$ have the form
\[
\mathcal{M}=\left(\begin{tabular}{c|c} $\mathcal U$ & $b$ \\ \hline $0,\ldots,0 $ &
$1$ \end{tabular}\right) \quad \mathcal{N}=\left(\begin{tabular}{c|c} $\mathcal V$ & $c$ \\ \hline $0,\ldots,0 $ &
$1$ \end{tabular}\right) 
\]
and  
$\mathcal N\mathcal M(x_1,\ldots,x_n,1)=(x_1,\ldots,x_n,1)$. 
Since $\rank (G_x)=
n+1$,
\, $\mathcal N\mathcal M$ is the identity $(n+1)\times (n+1)$-matrix
  and  $\mathcal N=\mathcal M^{-1}$. Thus
  the affine map 
   $\gamma\colon \Rn\to \Rn$ defined by $\gamma(z)=\mathcal Uz+b$ 
   belongs to  $\GLnZ$ and $\gamma(x)=y$.

\bigskip

\noindent{\it Case 2: $1 \leq  \rank (G_x)  \leq n$.}
By Lemma~\ref{Lemma:AffineSpaceClassFromPoint},
 $\dim(F_x)=\rank(G_x)-1=\rank(G_y)-1=\dim(F_y)$, and 
$
d_{F_x}=\max\{k \in\Zed\mid 1/k \in G_x\}=
\max\{k \in\Zed\mid 1/ k\in G_y\}=d_{F_y}.
$
Let us use the
abbreviations
\[e=\dim(F_x)=\dim(F_y),\,\,\,
d=d_{F_x}=d_{F_y}, 
\,\, c=c_{F_x}=c_{F_y}.\]

\smallskip
\noindent
Proposition~\ref{Prop:Alternative} yields an integer $p$
with
 $1\leq p\leq d$ such that the affine space  
\[
\textstyle 
F=\{(y_1,\ldots,y_n)\in\Rn\mid y_i=\frac{p}{d}\mbox{ for each }
i\in \{e+1,\dots, n\}\}
\]
satisfies
 $(\dim(F),d_F,c_F)=(e,d,c)$. 
By Theorem~\ref{Theorem:ClassAffineSpace}
there are  $\gamma_1,\gamma_2\in\GLnZ$
  such that $\gamma_1(F_x)=\gamma_2(F_y)=F$.

\smallskip

Writing for short $x'=\gamma_1(x),\,\,y'=\gamma_2(y),$
the proof of (i) amounts to showing
\begin{equation}
\label{equation:tobeproved}
y'\in \orb(x').
\end{equation}

\noindent
To this purpose, we define
the rational points $v_0,\dots,v_e\in F$   by stipulating that for
all $i\in\{0,\dots,e\}$  and $j\in\{1,\dots, n\}$:
\[(v_i)_j=j\mbox{th coordinate of $v_i$}=\begin{cases}
\frac{p}{d}&\mbox{if } j\in\{e+1,\dots,n\};\\
0&\mbox{if }j  \in\{1,\dots,e\}\setminus\{i\};\\
\frac{1}{d}&\mbox{if } j = i.\
\end{cases}\]

\noindent
Then $F$ coincides with
$\aff(v_0,\ldots,v_e)$,  and $\conv(v_0,\ldots,v_e)$ is a regular
$e$-simplex.
Next let us define the
  map $\eta\colon F\to \R^{e}$  by 
$$
\eta(z_1,\ldots,z_n)=(d\cdot z_1,\ldots,d\cdot z_e)\mbox{ for all } z=(z_1,\ldots,z_n)\in F.
$$
Equivalently, letting $\xi_1,\ldots,\xi_e\in\R^{e}$ 
denote the vectors of the
 canonical base of  $\R^e$, \, $\eta$ is the unique affine 
transformation such that 
$\eta(v_0)=\boldsymbol{0}=$ the origin of $ \R^{e}$,
 and $\eta(v_i)=\xi_i$ for each 
$i\in\{1,\ldots, e\}$.
For any
integer
$m$ and  subgroup  $G$  of the additive group $\mathbb R$,
 let us use the self-explanatory notation
$
m\cdot G=\{m\cdot g\mid g\in G\}.
$
Then for every $z=(z_1,\ldots,z_n)\in F$ we can write
\begin{equation}\label{Eq:FlatGroup}
\textstyle
G_{\eta(z)}=d\cdot  z_1\Zed+\cdots +d\cdot  z_e\Zed+\Zed=d\cdot (z_1\Zed+\cdots +z_e\Zed+\frac{1}{d}\Zed)=d\cdot G_{z}.
\end{equation}
In particular,
$
G_{\eta(x')}=d\cdot G_{x'}=d\cdot G_x=d\cdot G_y=d\cdot G_{y'}= G_{\eta(y')}.
$
Since $\rank(G_{\eta(x')})=e+1=\rank(G_{\eta(y')})$,  
from Lemma
\ref{Lemma:AffineSpaceClassFromPoint} we obtain
 $\dim(F_{\eta(x')})=e$.
 The proof of  Case 1 above   (with $e$ in place of $n$) 
yields  $\gamma_3\in\GLeZ$ such that 
\begin{equation}\label{Eq:Gamma1}
\gamma_3(\eta(x'))=\eta(y').
\end{equation}
Since $\eta$ is one-to-one  onto $\R^e$, there exist uniquely
determined  rational points $w_0,\ldots,w_e\in F$ such that 
\begin{equation}\label{Eq:Ws}
\eta(w_0)=\gamma_3(\boldsymbol{0})\mbox{ and
} \eta(w_i)=\gamma_3(\xi_i)\mbox{ for each }i\in\{1,\ldots,e\}.\end{equation}
By  \eqref{Eq:FlatGroup},
\begin{equation}\label{Eq:DenWs}
 \den(w_i)=d  \cdot\den(\gamma_3(\xi_i))=d,
 \mbox{  for each $i\in\{1,\ldots,e\}$.}
\end{equation}

\bigskip
\noindent{\it Claim}: The $e$-simplex
$\conv(w_0,\ldots,w_e)\subseteq F$ is regular.
\smallskip

Indeed,  for each  $i\in \{1,\dots,e\}$ we have
$\widetilde{\eta(w}_i)=
\widetilde{\gamma_3(\xi}_i)=(\gamma_3(\xi_i),1)=(\eta(w_i),1)$.
Thus
\begin{equation}\label{Eq:BasisZe}
\{\widetilde{\eta(w}_0),\ldots,\widetilde{\eta(w}_{e})\}
 =\{(\eta(w_0),1),\ldots,(\eta(w_{e}),1)\}
 \mbox{ is a base of $\Zed^{e}$.}
\end{equation}

\smallskip
In case $c=1$,  the proof of 
Proposition~\ref{Prop:Alternative}
 shows that $p=1$,
whence 
\begin{equation}\label{EqWfromEtaW}
\tilde{w}_i=d\cdot (w_i,1)=(d\cdot w_i, d)
=(\eta(w_i),1,\ldots,1,d),
\mbox{ for each $i\in\{1,\ldots,e\}$.}
\end{equation}
 Let $\mu_1,\ldots,\mu_{n+1}$ denote the canonical base of the free
 abelian group 
$\Zed^{n+1}$.
From  \eqref{Eq:BasisZe}-\eqref{EqWfromEtaW}  it follows that
$\{\tilde{w}_0,\ldots,\tilde{w}_e,\mu_{e+2},\ldots,\mu_{n+1}\}$ 
is a base of $\Zed^{n+1}$. Thus
  the $e$-simplex
$\conv(w_0,\ldots,w_e)$ is regular.

\smallskip
In case   $c\neq 1$,   by   Lemma~\ref{lemma:expedient}, $e=n-1$.
Further, 
$\tilde{w}_i=d\cdot (w_i,1)=(d\cdot w_i, d)=(\eta(w_i),p,d)$ 
for each $i\in\{0,\ldots,e\}$. 
The proof of 
Proposition~\ref{Prop:Alternative}
provides an integer $q\geq 1$ such that 
$p\cdot c-q\cdot d=1.$ Upon 
defining  $v=(0,\ldots,0,
\frac{q}{c})$, from
  \eqref{Eq:BasisZe}
it follows that  the set of  integer vectors 
$
\{(\eta(w_0),p,d), \,\ldots, (\eta(w_e),p,d),\,\,(0,\ldots,0,q,c)\}
=\{\tilde{w}_0,\ldots,\tilde{w}_e,\tilde{v}\}
$
 is a base of $\Zed^{n+1}$.  
 Thus,  
$\conv(w_0,\ldots,w_e)$ is regular. 

Our claim is settled.

\medskip

As an immediate consequence of the
 regularity of the $e$-simplex $\conv(w_0,\ldots,w_e)$,  
 also the $n$-simplex 
$\conv(w_0,\ldots,w_e,v_{e+1},\ldots,v_n)$ is  regular. 
By \eqref{Eq:DenWs},\,\,\, $\den(w_i)=\den(v_i)=d$ for each 
$i\in\{0,\dots,e\}$.  Lemma~\ref{Lemma:GammaFromSimplex}
yields  $\gamma_{4}\in\GLnZ$ such that $\gamma_4(v_i)=w_i$ for 
$i\in\{0,\dots,e\}$, and
$\gamma_4(v_i)=v_i$  for 
$i\in\{e+1,\dots,n\}$.  
It follows that  $\gamma_4(F)=F$. 
 
Further, 
 \begin{equation}
 \label{Eq:CommutingGammasEta}
\eta(\gamma_4(z))=\gamma_3(\eta(z))
\mbox{ for each $z\in F$}.
\end{equation}  
As a matter of fact, 
 by \eqref{Eq:Ws}  we can write
$\eta(\gamma_4(v_0))=\eta(w_0)=\gamma_3(\boldsymbol{0})=\gamma_3(\eta(v_0))$
and  
$\eta(\gamma_4(v_i))=\eta(w_i)=\gamma_3(\xi_i)
=\gamma_3(\eta(v_i))$
for each $i=1,\dots,n$.  
Then  \eqref{Eq:CommutingGammasEta} follows, because 
 $\eta$, $\gamma_3$ and $\gamma_4$
are affine transformations  and \,\,$F=\aff(v_0,\ldots,v_e)$.
 
 Since   $\gamma_4(x')\in F$, 
from \eqref{Eq:Gamma1} and \eqref{Eq:CommutingGammasEta}
we can write 
$\eta(\gamma_4(x'))=\gamma_3(\eta(x'))=\eta(y').$
Since the map $\eta\colon F\to \R^e$ is one-to-one, we finally obtain  
$\gamma_4(x')=y',$
which proves \eqref{equation:tobeproved} and  completes
the proof of the theorem.
\end{proof}

Using Lemmas~\ref{lemma:expedient} 
and~\ref{Lemma:AffineSpaceClassFromPoint} we obtain the 
following special cases of Theorem~\ref{theorem:classification}:

\begin{corollary}
\label{corollary:semifinal}
For all $x,y\in\Rn$,  if $\rank(G_x)\neq n$, the following conditions are equivalent:
\begin{itemize}
\item[(i)] There exists $\gamma\in\GLnZ$, such that 
$\gamma(x)=y$;
\item[(ii)]$G_x=G_y$.
\end{itemize} 
\end{corollary}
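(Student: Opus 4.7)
The plan is to derive this from the main classification theorem (Theorem~\ref{theorem:classification}) by showing that the hypothesis $\rank(G_x)\neq n$ forces the second component $c_{F_x}$ to be trivial, so that the invariant $(G_x,c_{F_x})$ collapses to just $G_x$.

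The direction (i)$\Rightarrow$(ii) is immediate from Lemma~\ref{Lemma:Necessary}. For (ii)$\Rightarrow$(i), I first observe that by Lemma~\ref{Lemma:AffineSpaceClassFromPoint} one has $\dim(F_x)=\rank(G_x)-1$, and similarly $\dim(F_y)=\rank(G_y)-1$. Since by hypothesis $G_x=G_y$, we also have $\rank(G_y)=\rank(G_x)\neq n$, so $\dim(F_x),\dim(F_y)\in\{0,1,\dots,n\}\setminus\{n-1\}$.

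The key point is that for any rational affine space $F\subseteq\R^n$ with $\dim(F)\neq n-1$ we have $c_F=1$: if $\dim(F)=n$, this holds by Definition~\ref{definition:c}, while if $\dim(F)<n-1$, it follows from Lemma~\ref{lemma:expedient}. Applying this to $F_x$ and $F_y$, I conclude $c_{F_x}=1=c_{F_y}$. Combined with $G_x=G_y$, this yields $(G_x,c_{F_x})=(G_y,c_{F_y})$, and Theorem~\ref{theorem:classification} delivers the desired $\gamma\in\GLnZ$ with $\gamma(x)=y$.

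There is essentially no obstacle here: the content of the corollary is precisely the observation that the auxiliary invariant $c_{F_x}$ is non-trivial only in codimension one, which has already been isolated in Lemma~\ref{lemma:expedient}. The proof is therefore just a short bookkeeping argument combining the two prior results.

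\begin{proof}
Implication (i)$\Rightarrow$(ii) is a direct consequence of Lemma~\ref{Lemma:Necessary}. For (ii)$\Rightarrow$(i), by Lemma~\ref{Lemma:AffineSpaceClassFromPoint} we have $\dim(F_x)=\rank(G_x)-1$ and $\dim(F_y)=\rank(G_y)-1=\rank(G_x)-1$. By hypothesis, $\dim(F_x)=\dim(F_y)\neq n-1$. If this common dimension equals $n$, then $c_{F_x}=c_{F_y}=1$ by Definition~\ref{definition:c}. If it is strictly less than $n-1$, then $c_{F_x}=c_{F_y}=1$ by Lemma~\ref{lemma:expedient}. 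In either case $(G_x,c_{F_x})=(G_y,c_{F_y})$, and Theorem~\ref{theorem:classification} supplies $\gamma\in\GLnZ$ with $\gamma(x)=y$.
\end{proof}
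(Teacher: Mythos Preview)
Your proof is correct and follows exactly the approach indicated by the paper, which simply notes that the corollary is a special case of Theorem~\ref{theorem:classification} obtained via Lemmas~\ref{lemma:expedient} and~\ref{Lemma:AffineSpaceClassFromPoint}. You have merely spelled out the details of that one-line justification.
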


\begin{corollary}
\label{corollary:final}
For all $x,y\in\Rn$, if  
$\max\{k\in\Zed\mid 1/k\in G_x\}\in\{1,2,3,4,6\}$, the following 
conditions are equivalent:
\begin{itemize}
\item[(i)] There exists $\gamma\in\GLnZ$, such that 
$\gamma(x)=y$;
\item[(ii)]$G_x=G_y$.
\end{itemize}
\end{corollary}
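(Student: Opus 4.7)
The plan is to derive this corollary as a direct consequence of Theorem~\ref{theorem:classification}, reducing to a small case analysis on the possible values of $c_{F_x}$. The direction (i)$\Rightarrow$(ii) is immediate: by Lemma~\ref{Lemma:Necessary}, $G_{\gamma(x)}=G_x$ for any $\gamma\in\GLnZ$, so $G_x=G_y$ whenever $\gamma(x)=y$.

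For (ii)$\Rightarrow$(i), I would invoke Theorem~\ref{theorem:classification} and show that the hypothesis $G_x=G_y$ forces $c_{F_x}=c_{F_y}$. First, Lemma~\ref{Lemma:AffineSpaceClassFromPoint} gives $\dim(F_x)=\rank(G_x)-1=\rank(G_y)-1=\dim(F_y)$ and $d_{F_x}=\max\{k\in\Zed\mid 1/k\in G_x\}=d_{F_y}$. Let $d$ denote this common value; by hypothesis $d\in\{1,2,3,4,6\}$. If $\dim(F_x)=\dim(F_y)\neq n-1$, then Lemma~\ref{lemma:expedient} immediately yields $c_{F_x}=c_{F_y}=1$ (alternatively, one may simply quote Corollary~\ref{corollary:semifinal} here).

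The remaining case is $\dim(F_x)=\dim(F_y)=n-1$, and here the main (though very mild) point of the proof is to observe that $d\in\{1,2,3,4,6\}$ leaves no freedom for $c_{F_x}$. Indeed, Lemma~\ref{lemma:expedient} tells us that any such $c_{F_x}$ must be a positive integer satisfying $\gcd(c_{F_x},d)=1$ and $c_{F_x}\leq\max(1,d/2)$. A direct enumeration
\[
d=1,2\colon\ c_{F_x}\leq 1;\quad d=3\colon\ c_{F_x}\leq 1;\quad d=4\colon\ c_{F_x}\in\{1\}\text{ since }\gcd(c_{F_x},4)=1\text{ and }c_{F_x}\leq 2;\quad d=6\colon\ c_{F_x}\in\{1\}\text{ since }\gcd(c_{F_x},6)=1\text{ and }c_{F_x}\leq 3,
\]
shows that in every case $c_{F_x}=1$, and likewise $c_{F_y}=1$. (This is the same arithmetic observation underlying the $\max(1,\phi(d)/2)=1$ count mentioned in the abstract.)

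In all cases we obtain $(G_x,c_{F_x})=(G_y,c_{F_y})$, so Theorem~\ref{theorem:classification} supplies the desired $\gamma\in\GLnZ$ with $\gamma(x)=y$. The only genuine work is the finite case check above; everything else is assembly of earlier results.
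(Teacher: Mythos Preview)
Your argument is correct and is exactly the intended one: the paper merely states that the corollary follows from Theorem~\ref{theorem:classification} together with Lemmas~\ref{lemma:expedient} and~\ref{Lemma:AffineSpaceClassFromPoint}, and you have spelled out precisely that deduction, including the finite check that $d\in\{1,2,3,4,6\}$ forces $c_{F_x}=1$. One tiny quibble: Lemma~\ref{lemma:expedient} only covers $\dim(F_x)<n-1$, so for $\dim(F_x)=n$ you should cite Definition~\ref{definition:c} (or, as you note, Corollary~\ref{corollary:semifinal}) rather than the lemma.
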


In conclusion, from  Lemmas~\ref{lemma:expedient} 
and~\ref{Lemma:AffineSpaceClassFromPoint}, 
Proposition~\ref{Prop:Alternative} and
Theorem \ref{theorem:classification} we obtain:
 
\begin{corollary}
\label{corollary:post-final} The map $ \orb(x)\mapsto (G_x,c_{F_x})$
is a one-one correspondence between $(\GLnZ)$-orbits
and pairs $(G,c)$, where $G$ is an arbitrary subgroup
 of the additive 
group $\mathbb R$ having rank $\leq n+1$ and containing
$1$ among its elements, 
 and $c$ is an integer satisfying the
following conditions: if $\rank(G)\not=n$ then $c=1;$
if $\rank(G)=n$,   letting $1/d$ be the smallest positive rational in $G$,
then $1\leq c\leq \max(1,d/2)$ and $\gcd(c,d)=1.$  There are
precisely  $\max(1,{\phi(d)}/{2})$ possible values for $c$.  
\end{corollary}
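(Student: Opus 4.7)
The plan is to verify four things in turn: well-definedness of the map on orbits, injectivity, characterization of the image, and the arithmetic count of admissible $c$ for fixed $d$. Well-definedness is immediate from Lemma~\ref{Lemma:Necessary} (which gives $G_{\gamma(x)}=G_x$ and $F_{\gamma(x)}=\gamma(F_x)$ for $\gamma\in\GLnZ$) together with Theorem~\ref{Theorem:ClassAffineSpace} (which forces $c_{\gamma(F)}=c_F$). Injectivity is exactly the content of Theorem~\ref{theorem:classification}.

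To characterize the image I would fix $x\in\Rn$ and set $e=\dim(F_x)$. Lemma~\ref{Lemma:AffineSpaceClassFromPoint} gives $\rank(G_x)=e+1\le n+1$ and identifies $d_{F_x}$ with $\max\{k\in\Zed\mid 1/k\in G_x\}$, so when $\rank(G_x)=n$ this coincides with the $d$ of the corollary. Lemma~\ref{lemma:expedient} then forces $c_{F_x}=1$ whenever $\rank(G_x)\ne n$, and in the complementary case $e=n-1$ yields $\gcd(c_{F_x},d_{F_x})=1$ and $1\le c_{F_x}\le\max(1,d_{F_x}/2)$. This matches the list of conditions exactly.

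For surjectivity, given an admissible pair $(G,c)$, set $e=\rank(G)-1$ and, when $e=n-1$, let $d$ be the $d$ of the corollary; in the remaining cases set $d=\max\{k\in\Zed\mid k>0,\ 1/k\in G\}$ (which is forced to equal $1$ when $e=n$, since then $1$ must be primitive in $G$ for $G$ to be realizable as $G_x$). Since $G$ is a finitely generated torsion-free subgroup of $\R$, it is free abelian of rank $e+1$, and $G\cap\Q=\frac{1}{d}\Zed$ is a pure subgroup, yielding a splitting $G=\frac{1}{d}\Zed\oplus\Zed\alpha_1\oplus\cdots\oplus\Zed\alpha_e$ for suitable $\alpha_1,\dots,\alpha_e\in G$. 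I would then feed $(e,d,c)$ into Proposition~\ref{Prop:Alternative} to obtain an integer $p$ with $\gcd(p,d)=1$ such that the affine space $F=\{y\in\Rn\mid y_{e+1}=\cdots=y_n=p/d\}$ satisfies $(\dim F,d_F,c_F)=(e,d,c)$, and set $x=(\alpha_1,\dots,\alpha_e,p/d,\dots,p/d)\in F$. Bezout gives $\Zed+(p/d)\Zed=\frac{1}{d}\Zed$, whence $G_x=G$; and $\rank(G_x)=e+1=\dim F+1$ combined with $x\in F$ forces $F_x=F$ by a dimension count, so $c_{F_x}=c_F=c$.

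Finally, the count: the involution $c\mapsto d-c$ on $\{c\in[1,d-1]:\gcd(c,d)=1\}$ has a fixed point only when $d=2c$ with $\gcd(c,d)=1$, i.e.\ $(d,c)=(2,1)$. So for $d\in\{1,2\}$ there is exactly one admissible $c$, matching $\max(1,\phi(d)/2)=1$; for $d\ge 3$ the involution is fixed-point-free and pairs the $\phi(d)$ totatives into $\phi(d)/2$ orbits, each with a unique representative in $[1,d/2]$. The main obstacle I expect is the surjectivity step, specifically the pure decomposition of $G$ and the verification that the single integer $p$ supplied by Proposition~\ref{Prop:Alternative} simultaneously realizes $F_x=F$ and (via Bezout) $G_x=G$; the interplay between $e$, $d$, $c$ and the arithmetic choice of $p$ is the only subtle piece, with the remaining verifications being bookkeeping on top of the earlier lemmas.
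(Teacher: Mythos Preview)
Your argument is correct and follows precisely the route the paper intends: the paper's own proof is nothing more than a pointer to Lemmas~\ref{lemma:expedient} and~\ref{Lemma:AffineSpaceClassFromPoint}, Proposition~\ref{Prop:Alternative}, and Theorem~\ref{theorem:classification}, and you have unpacked exactly how those pieces assemble into well-definedness, injectivity, the constraints on the image, surjectivity, and the count of admissible $c$.

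Two minor remarks. First, Proposition~\ref{Prop:Alternative} only covers $e\in\{0,\dots,n-1\}$, so in the surjectivity step the case $e=n$ must be treated directly rather than ``fed into'' that proposition; but this is trivial, since with $d=1$ the point $x=(\alpha_1,\dots,\alpha_n)$ already does the job. Second, your parenthetical observation that $d=1$ is forced when $e=n$ (because $1,x_1,\dots,x_n$ then form a $\Zed$-basis of $G_x$, so $G_x\cap\Q=\Zed$) is exactly right and is in fact an implicit constraint on $G$ that the corollary's statement glosses over; you have correctly identified rather than introduced this subtlety.
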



\begin{thebibliography}{99}


\bibitem{dan} 
J.S. Dani, 
Density properties of orbits under
discrete groups, 
{\em J. Indian Math. Soc.}
39 (1975) 189--218. 

 \bibitem{gui}
A. Guilloux,
A brief remark on orbits of SL(2, $\mathbb Z$)  in the
Euclidean plane,
{\it  Ergodic Theory Dynam. Systems}  30  (2010)  1101--1109.
 



\bibitem{Ew1996}
   G. Ewald,
   {\it Combinatorial convexity and algebraic geometry},
   Grad. Texts in Math., Vol.~168, Springer-Verlag, New York, 1996.
   
   
 \bibitem{launog}
M. Laurent, A. Nogueira,   
Approximation to points in the plane by SL(2, $\mathbb Z$)-orbits,
{\it J. London Math. Soc.} (2) 85 (2012) 409--429.


\bibitem{mun-dcds}
D. Mundici,
 The Haar theorem for lattice-ordered
abelian groups with order-unit,
{\it  Discrete Contin. Dyn. Syst.}
21 (2008) 537-549.

\bibitem{mun-cpc}
D. Mundici,
Invariant measure under the affine group
over $\mathbb Z,$ 
{\it Combin. Probab. Comput.}  23
(2014)  248--268.


\bibitem{Mun201X}
D. Mundici, 
Classifying $(\mathsf{GL}(2,\Zed)\ltimes\Zed^2)$-orbits by 
subgroups of $\R$, 
(preprint available at arXiv:1401.3708)


\bibitem{nog2002}
A. Nogueira,  
Orbit distribution on $\R^2$ under the natural action of 
${\rm SL}(2,\mathbb Z)$, 
{\it Indag. Math.} (N.S.),
 13 (2002)  103--124.
 

\bibitem{nog2010}
A. Nogueira, 
Lattice orbit distribution on $\mathbb R^2$,
{\it Ergodic Theory Dynam. Systems}
30 (2010) 1201--1214. Erratum,  ibid., p. 1215.
  

\bibitem{wit}
E. Witten,
SL(2, $\mathbb Z$) action on
three-dimensional conformal field
theories with abelian symmetry,
In: {\it From Fields to Strings: Circumnavigating
Theoretical Physics},
Ian Kogan Memorial Collection (in 3 volumes),
M.~ Shifman et al., Eds., World Scientific,
Singapore, (2005), pp.
1173--1200.

\end{thebibliography}
\end{document}